\newcommand{\ZZ}{{\mathbb Z}}
\newcommand{\KK}{{\mathbb K}}
\newcommand{\CC}{{\mathbb C}}
\newcommand{\RR}{{\mathbb R}}
\newcommand{\QQ}{{\mathbb Q}}
\newcommand{\FF}{{\mathbb F}}
\newcommand{\TT}{{\mathbb T}}
\newcommand{\E}{{\mathfrak E}}
\newcommand{\A}{{\mathcal A}}
\newcommand{\val}{\mathrm{val}}
\newcommand{\Val}{\mathrm{Val}}
\newcommand{\Trop}{\mathrm{Trop}}
\newcommand{\Inf}{\mathfrak{Infl}}
\newcommand{\I}{\mathfrak{I}}
\newcommand{\conv}{\mathrm{ConvHull}}
\newcommand{\deff}{\mathrm{def}}
\newcommand{\area}{\mathrm{area}}
\newcommand{\dirr}{P(\ZZ^2)}
\DeclareMathOperator{\trop}{trop}
\newtheorem{theo}{Theorem}
\newtheorem{thm}{Theorem}[section]
\newtheorem{prop}[thm]{Proposition}
\newtheorem{lemma}[thm]{Lemma}
\newtheorem{cor}[thm]{Corollary}
         {\theoremstyle{definition}
\newtheorem{rem}[thm]{Remark}}
         {\theoremstyle{definition}
\newtheorem{ex}[thm]{Example}}
 {\theoremstyle{definition}
\newtheorem{defi}[thm]{Definition}}
\newtheorem*{th_old}{Theorem}
\begin{document}

\title{THE NEWTON POLYGON OF A PLANAR SINGULAR CURVE AND ITS SUBDIVISION} 

\date{\today}

\author[N. Kalinin]{Nikita Kalinin}

\address{Universit\'e de Gen\`eve, Section de
  Math\'ematiques, Route de Drize 7, Villa Battelle, 1227 Carouge, Switzerland} 

\address{St. Petersburg Department of the Steklov Mathematical Institute, Russian Academy of Sciences, Fontanka 27,
St. Petersburg, 191023 Russia.}

\email{Nikita.Kalinin\{dog\}unige.ch, nikaanspb\{dog\}gmail.com}

\begin{abstract}
Let a planar algebraic curve $C$ be defined over a valuation field by an equation
$F(x,y)=0$. Valuations of the coefficients of $F$ define a subdivision of the
Newton polygon $\Delta$ of the curve $C$.  
 
If a given point $p$ is of multiplicity $m$ on $C$, then the coefficients of
$F$ are subject to certain linear constraints. These constraints can be visualized
in the above subdivision of $\Delta$. Namely, we find a distinguished
collection of faces of the above subdivision, with total area at least
$\frac{3}{8}m^2$. The union of these faces can be considered to be the ``region of
influence'' of the singular point $p$ in the subdivision of $\Delta$. We also discuss three different definitions of a tropical point of multiplicity $m$.

\end{abstract}

\keywords {tropical singular point, {\it m}-fold point, lattice width, extended Newton polyhedron}

\maketitle

\section{Introduction}

Fix a non-empty finite subset $\A\subset \ZZ^2$ and any valuation
field $\KK$. We consider a curve $C$ given by an
equation $F(x,y)=0$, where \begin{equation}
\label{eq_curve}
F(x,y)=\sum\limits_{(i,j)\in \A
}a_{ij}x^iy^j, \ a_{ij}\in \KK^*.
\end{equation} 

Suppose that we know
only the valuations of the coefficients of the polynomial
$F(x,y)$. Is it possible to extract
any meaningful information from this knowledge? Unexpectedly, many
geometric properties of $C$ are visible from such a viewpoint.

The {\it Newton polygon} $\Delta=\Delta(\A)$ of the curve $C$ is the convex hull of $\A$ in $\RR^2$. The {\it extended
  Newton polyhedron} $\widetilde{\A}$ of the curve $C$ is the convex hull
of the set
$\{((i,j),s)\in\RR^2\times \RR| (i,j)\in \A, s\leq
\val(a_{ij})\}$. Projection of all the faces of
$\widetilde{\A}$ along $\RR$ induces a {\it subdivision} of
$\Delta$. Note that the valuations of the coefficients of $F$
completely determine $\widetilde{\A}$ and this subdivision of $\Delta$.

A point $p$ is
{\it of multiplicity $m$} (or is an $m$-fold point) on $C$ 
if the lowest term in the Taylor
expansion of $F$ at $p$ has degree $m$. Nagata's conjecture
proposes the estimate $d\geq m\sqrt n$ for the minimal degree $d$ of a curve which has $n>9$
points of multiplicity $m$ in general position. Motivated by this
conjecture, we study the following
question: how do the points of multiplicity $m$ on $C$ 
influence the subdivision of $\Delta$? This paper is devoted
to the case of one $m$-fold point, whereas \cite{2013arXiv1310.6684K} concerns the
case of several $m$-fold points.

By definition, the non-Archimedean amoeba of $C$ is
$\Val(C)=\{(\val(x),\val(y))|(x,y)\in C\}.$ Also, we define the
tropical curve $\Trop(C)$ as the set of non-smooth points of the
function $\max\limits_{(i,j)\in \A}(iX+jY+\val(a_{ij}))$. It is known
that $\Val(C)\subset \Trop(C)$. Furthermore, $\Trop(C)$ is a graph which is combinatorially
dual to the subdivision of $\Delta$ (described above). In particular, 
each vertex $V$ of $\Trop(C)$ corresponds to a face $d(V)$
of this subdivision of $\Delta$. 

Fix a point $p=(p_1,p_2)\in (\KK^*)^2$. Define $P=\Val(p)=(\val(p_1),\val(p_2))$. We consider a curve $C$ given by \eqref{eq_curve} such
that $p$ is of multiplicity $m$ on $C$. In such a case, the coefficients $a_{ij}$ of
$C$ satisfy a certain set of $\frac{m(m+1)}{2}$ linear
constraints.  In turn, the constraints for the numbers
$\val(a_{ij})$ manifest themselves via the fact that the subdivision of $\Delta$ enjoys very special properties.

\begin{figure}[htb]
\begin{center}
\begin{subfigure}[b]{0.3\textwidth}
\begin{tikzpicture}[scale=0.3]
\draw [very thin, gray] (0,-1) grid (15,11);
\draw[very thick] (1,10)--(10,10)--(12,9)--(14,6)--(14,0)--(2,1)--cycle;
\draw[very thick]
(2,6)--(3,6)--(4,5)--(6,8)--(8,9)--(10,8)--(12,7)--(13,6)--(12,6)--(10,5)--(8,3)--(6,4)--(4,3)--(3,5)--(2,6);

\draw[very thick] (3,6)--(3,5);
\draw[very thick] (4,5)--(4,3);
\draw[very thick] (6,8)--(6,4);
\draw[very thick] (8,9)--(8,3);
\draw[very thick] (10,8)--(10,5);
\draw[very thick] (12,7)--(12,6);
\end{tikzpicture}
\end{subfigure}
\quad\quad\quad
\begin{subfigure}[b]{0.3\textwidth}
\begin{tikzpicture}[scale=0.3]
\draw [very thin, gray] (0,0) grid (16,12);
\draw[very thick] (2,0)--(13,0)--(16,2)--(16,9)--(8,12)--(1,8)--cycle;
\draw[very thick]
(3,7)--(4,7)--(8,11)--(8,12)--(10,11)--(11,10)--(12,8)--(13,8)--(16,5)--(13,5)--(12,4)--(12,2)--(10,2)--(10,0)--(9,0)--(7,2)--(5,5)--cycle;

\draw[very thick] (4,7)--(4,6);
\draw[very thick] (5,8)--(5,5);
\draw[very thick] (8,11)--(10,11);
\draw[very thick] (7,10)--(11,10);
\draw[very thick] (12,8)--(12,4);
\draw[very thick] (13,8)--(13,5);
\draw[very thick] (14,7)--(14,5);
\draw[very thick] (10,2)--(12,4);
\draw[very thick] (11,2)--(12,3);
\draw[very thick] (7,2)--(10,2);
\draw[very thick] (8,1)--(10,1);

\draw[very thick] (5,8)--(6,10)--(7,10);
\draw[very thick] (11,10)--(12,10)--(12,8);
\draw[very thick] (5,5)--(4,3)--(7,2);
\end{tikzpicture}
\end{subfigure}
\end{center}
\begin{center}
\begin{subfigure}[h]{0.3\textwidth}
\begin{tikzpicture}[scale=0.3]
\path (0,0);
\begin{scope}[shift={(0,-5.5)}]
\draw (0,0) node {$\bullet$};
\draw (0,0) node[above] {$P$};

\draw[very thick] (-7,0)--(5,0);
\draw[very thick] (2,2)--(1,0)--(2,-1);
\draw[very thick] (4,2)--(3,0)--(4,-2);
\draw[very thick] (6,1)--(5,0)--(5,-1);
\draw[very thick] (-3,2)--(-2,0)--(-3,-2);
\draw[very thick]  (-4.75,0.5)--(-4,0)--(-3.5,-2);
\draw[very thick]  (-5.5,0.5)--(-6,0)--(-6,-0.5);
\draw[very thick] (-7,1)--(-7,0)--(-7.5,-0.5);
\end{scope}
\end{tikzpicture}
\vspace{23pt}
\caption{if $\Val(p)$ is not a vertex}
\label{governorship_1}
\end{subfigure}
\quad\quad\quad
\begin{subfigure}[ht]{0.3\textwidth}
\begin{tikzpicture}[scale=0.3]
\path (-5,0);
\begin{scope}[shift={(26mm,0)}]
\draw (0,0) node {$\bullet$};
\draw (0,0) node[above right] {$P$};
\draw[very thick] (0,0)--(0,2)--(1,3);
\draw[very thick] (-1,3)--(0,2)--(0,4)--(-1,4);
\draw[very thick] (0,4)--(0.5,5);
\draw[very thick] (0,0)--(2,1)--(3,1);
\draw[very thick]  (2,1)--(2,2);
\draw[very thick]  (0,0)--(-2,2)--(-2,3);
\draw[very thick]  (-2,2)--(-4,3);
\draw[very thick] (0,0)--(3.5,0)--(3.5,0.5);
\draw[very thick] (4,-0.5)--(3.5,0)--(5,0)--(5,-1);
\draw[very thick]  (6,1)--(5,0)--(6,0)--(7,1);
\draw[very thick] (6,0)--(6,-1);
\draw[very thick]  (0,0)--(-3,0)--(-4,1);
\draw[very thick] (-4,-1)--(-3,0)--(-5,0)--(-5,1);
\draw[very thick] (-5,0)--(-6,-1);
\draw[very thick] (0,0)--(-3,-2)--(-4,-1.5);
\draw[very thick] (-3,-2)--(-4,-5);
\draw[very thick]  (0,0)--(0,-2.5)--(1,-2.5);
\draw[very thick] (-1,-3.5)--(0,-2.5)--(0,-4)--(1,-4);
\draw[very thick]  (-1,-5)--(0,-4)--(0,-5);
\draw[very thick]  (0,0)--(1.5,-1.5)--(2.5,-1.5);
\draw[very thick] (1.5,-2.5)--(1.5,-1.5)--(2.5,-2.5)--(3.5,-2.5);
\draw[very thick] (2.5,-3.5)--(2.5,-2.5);
\end{scope}
\end{tikzpicture}
\caption{if $\Val(p)$ is a vertex}
\label{governorship_2}
\end{subfigure}
\end{center}

\caption{If $P$ is not a vertex of $\Trop(C)$ (left column), then the
  collection $\I(P)$ of vertices consists of all the vertices of
  $\Trop(C)$ lying on the extension of the edge through $P$.  If $P$ is a vertex of $\Trop(C)$ (right column), then we take the vertices on the extensions of all the edges through $P$. In each case the
  corresponding set of faces of the subdivision of
  $\Delta$, the ``region of influence'' of $P$, is drawn at the top. The sum of the
  areas of the faces in \eqref{eq_estimate} is at least $\frac{1}{2}m^2$
  in (A) and at least $\frac{3}{8}m^2$ in (B).}
\label{governorship}

\end{figure}
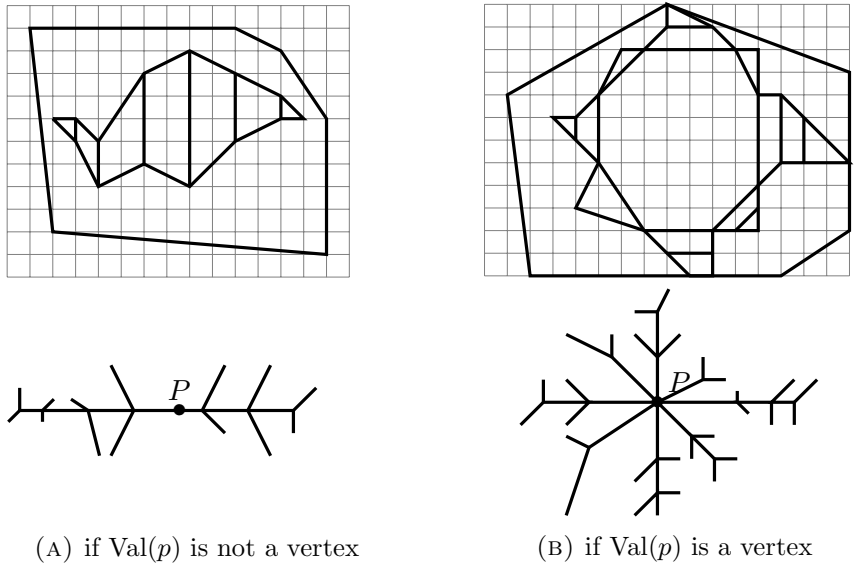

In particular, there is a certain collection $\I(P)$ of vertices of
$\Trop(C)$ (Figure~\ref{governorship}, lower row). We estimate the total area of the faces in the
subdivision of $\Delta$ dual to the vertices in $\I(P)$
(Figure~\ref{governorship}, upper row). Namely, if the minimal lattice width of $\Delta$ is at least $m$, then the following
inequality holds: 
\begin{equation}
\label{eq_estimate}
\sum_{V\in \I(P)}\area(d(V))\geq cm^2.
\end{equation}
 If $P$ is not a vertex of
$\Trop(C)$, then \eqref{eq_estimate} holds with $c=\frac{1}{2}$; if $P$ is a
vertex of $\Trop(C)$, then \eqref{eq_estimate} holds with
$c=\frac{3}{8}$, see Lemma \ref{lemma_implication}, Theorems \ref{exertion_edges},\ref{exertion_vertices} in Section \ref{formulation}  for more details.
\begin{rem}
\label{motivation}
Let us fix points $p_1,p_2,\dots, p_n$ in general position. Suppose that $C$ passes through them. 
In \cite{2013arXiv1310.6684K}, we prove that in this case each vertex
of $\Trop(C)$ belongs to at most two sets $\I(P_i)$,
i.e., for indices $i_1<i_2<i_3$ we have $\I(P_{i_1})\cap
\I(P_{i_2})\cap\I(P_{i_3})=\varnothing$. 
\end{rem}
\begin{defi}[\cite{hyper,markwig}]
\label{def_extrinsic}The multiplicity of a point $P$ on a tropical
curve $H$ is at least $m$ {\it in the $\KK$-extrinsic sense} if there
exists an algebraic curve $H'\subset (\KK^*)^2$ and a point $p\in H'$ of
multiplicity $m$ such that $\Trop(H')=H,\Val(p)=P$.
\end{defi}

This definition is extrinsic because it involves other objects
besides $H$. We find new necessary
intrinsic conditions (in terms of the
subdivision of $\Delta$) for the presence of
an $m$-fold point on $C$. We give two other definitions (Def.~\ref{def_intrinsic}, Def.~\ref{def_intermediate}) of a tropical
singular point and compare them in Section~\ref{tropical_multiplicity}.

Previous research in this direction has been carried out for $m=2$ in \cite{markwig,markwig2}, for
inflection points in \cite{brugalle}, and for cusps in \cite{ganor}. 
Lifting of tropical singular points to the usual singular points is discussed in
\cite{patch}. In \cite{hyper2,discr,hyper} the matroid $M$ associated with
the aforementioned linear constraints on $a_{ij}$ is studied; see also
Remark \ref{euler}. 

{\bf Acknowledgments.} I thank M.~Karev, S.~Lanzat, G.~Mikhalkin, Ch.~Sia, and two referees for
help with editing this text. Research is supported in part by grant 159240 of the Swiss National Science
Foundation as well as by the National Center of Competence in Research
SwissMAP of the Swiss National Science Foundation.

\section{Preliminaries}
\label{preliminaries}

\subsection{Tropical geometry and valuation fields} Let $\TT$ denote $\RR\cup \{-\infty\}$. $\TT$ is usually called {\it
  the tropical semi-ring}. Let $\KK$ be any valuation field, i.e., a
field equipped with a {\it valuation map} $\val: \KK \to \TT$, where
this map $\val$ possesses the following properties:

\begin{itemize}
\item $\val(ab)=\val(a)+\val(b)$,
\item $\val(a+b)\leq \max(\val(a),\val(b))$,
\item $\val(0)=-\infty$.
\end{itemize}

\begin{ex}
Let $\FF$ be an arbitrary (possibly finite) field. An example of a
valuation field is the field $\FF \{\{t\}\}$ of
generalized Puiseux series. Namely, 
$$\FF \{\{t\}\} = \bigg\{\sum_{\alpha\in I}c_\alpha t^\alpha |
c_\alpha\in \FF,I\subset \RR\bigg\},$$ where $t$ is a formal variable and $I$ is
a well-ordered set, i.e., each of its nonempty subsets has a least
element. The valuation map $\val: \KK \to \TT$ is defined by the rule $$\val\Big(\sum\limits_{\alpha\in
  I}c_\alpha t^\alpha\Big):=-\min\limits_{\alpha\in I}\{\alpha|c_\alpha\ne
0\}, \val(0):=-\infty.$$  
\end{ex}

Different constructions of Puiseux series and their properties are
listed in \cite{puiseux, puiseaux2}. 

\begin{rem}
\label{rem_max} It follows from the axioms of the valuation map that
if $a_1+a_2+\dots+a_n=0, a_i\in\KK^*$, then the maximum among
$\val(a_i),i=1,\dots,n$ is attained at least twice.
\end{rem}

\begin{ex}Suppose that $\KK=\CC \{ \{t\}\}$ and
all the coefficients $a_{ij}\in\KK^*$ in \eqref{eq_curve} are
convergent series in $t$ for $t$ close
to zero. Then, specializing $t$ to be $t_k\in\CC$ close to zero, we obtain a family of complex
curves $C_{t_k}$ defined by the equations $\sum_{(i,j)\in \A}
a_{ij}(t_k)x^iy^j=0$. Note that the valuation $\val\big(\sum\limits_{\alpha\in
  I}c_\alpha t^\alpha\big)=-\min\limits_{\alpha\in
  I}\{\alpha|c_\alpha \ne 0\}$ is a measure of the
asymptotic behavior of $a_{ij}$ as $t_k$ tends to 0, i.e.,
$a_{ij}(t_k) \sim  t_k^{-\val(a_{ij})}$.
\end{ex}

 The
combinatorics of the extended Newton polyhedron reflects some
asymptotically visible properties of a generic member of the family
$\{C_{t_k}\}$. In such a way, real algebraic curves with a prescribed topology can
be constructed; see Viro's patchworking method.

\begin{defi}[\cite{kapranov}] 
{\it The non-Archimedean amoeba} $\Val(C)\subset \TT^2$ of an algebraic curve $C\subset \KK^2$ is the image
 of $C$ under the map $\val$ applied coordinate-wise.  
\end{defi}

Now we recall some basic notions of tropical geometry.

\begin{defi}
For the given $F(x,y)=\sum\limits_{(i,j)\in\A} a_{ij}x^iy^j$, we define 
\begin{equation}
\label{eq_tropical}
\Trop(F)(X,Y)=\max\limits_{(i,j)\in \A}(iX+jY+\val(a_{ij})).
\end{equation}
\end{defi}

We use the letters $x,y$ for variables in $\KK$, and we use $X,Y$ for
the corresponding variables in
$\TT$.

Fix a finite subset $\A\subset \ZZ^2$. Let us consider a curve $C$ given by \eqref{eq_curve}.

\begin{defi}
\label{def_trop}
Let $\Trop(C)\subset \TT^2$ be the set of points where $\Trop(F)$ is not smooth,
that is, the set of points where the maximum in \eqref{eq_tropical} is attained at least twice.
\end{defi}

It is clear that $\Trop(C)$ is a planar graph, whose edges are straight.

\begin{rem}
We have $\Val(C)\subset\Trop(C)$ because if $F(x,y)=0$, then the
maximum among $\val(a_{ij}x^iy^j)$ must be
attained at least twice (Remark \ref{rem_max}). If  $\KK$ is algebraically closed and the image of $\val$ contains $\QQ$, then $\overline{\Val(C)}=\Trop(C)$
(c.f. \cite{kapranov}, Theorem 2.1.1). 
\end{rem}

To the curve $C$, we associate a subdivision of its Newton
polygon $\Delta = \conv(\A)$ by the following procedure. Consider
{\it the extended Newton
polyhedron} (\cite{kapranov}) $$\widetilde{\A}=\conv\Big(\bigcup
\{(i,j,x)|(i,j)\in \A, x\leq
\val(a_{ij})\}\Big)\subset \RR^3.$$ The projection of the edges of $\widetilde{\A}$ to the first two
coordinates gives us a subdivision of $\Delta$. Hence the curve $C$ produces the tropical
curve $\Trop(C)$ and the subdivision of $\Delta$.

\begin{prop}
\label{trop_prop}
This subdivision is dual to $\Trop(C)$ in the following sense (see Example~\ref{secondexample})
: 
\begin{itemize}
\item each vertex $Q$ of
$\Trop(C)$ corresponds to some face $d(Q)$ of the subdivision of $\Delta$; 
\item each edge $E$ of
$\Trop(C)$ corresponds to some edge $d(E)$ in the subdivision of $\Delta$, and the direction
of the edge $d(E)$ is perpendicular to the direction of $E$;
\item if a vertex $Q\in\Trop(C)$ is an
end of an edge $E\subset\Trop(C)$, then $d(Q)$ contains $d(E)$;
\item each vertex of
$\widetilde{A}$ corresponds to a connected component of
$\TT^2\setminus\Trop(C)$.
\end{itemize}
\end{prop}
\begin{proof}
This proposition follows from Def.~\ref{def_trop}.
\end{proof}

See Figure \ref{example3} for an example of the above duality. Also,
parts of tropical curves and the corresponding parts of the dual subdivisions are shown in Figure \ref{governorship}.

\begin{defi}
Suppose that $\Trop(F)$ is equal to $i_1X+j_1Y+\val(a_{i_1j_1})$ on one side of
an edge $E\subset \Trop(C)$ and to $i_2X+j_2Y+\val(a_{i_2j_2})$ on the other side of
$E$. Therefore $E$ is locally defined by the equation
$(i_1-i_2)X+(j_1-j_2)Y+(\val(a_{i_1j_1})-\val(a_{i_2j_2}))=0$. In this case the
endpoints of $d(E)$ are $(i_1,j_1),(i_2,j_2)$, and, by definition, the
{\it weight} of $E$
is equal to the lattice length of $d(E)$, which is $\mathrm{gcd}(i_1-i_2,j_1-j_2)$ by
definition.
\end{defi}

\begin{ex}
\label{secondexample}
Consider a curve $C'$ defined by the equation $G(x,y)=0$, where \begin{align*}G(x,y)&=t^{-3} xy^3 -
(3t^{-3}+t^{-2})xy^2+ (3t^{-3}+2t^{-2}-2t^{-1})xy -
(t^{-3}+t^{-2}-2t^{-1}-3t^2)x+\\&+t^{-2}x^2y^2
-(2t^{-2}-t^{-1})x^2y+(t^{-2}-t^{-1}-3t^2)x^2+t^{-1}y -(t^{-1}+t^2)
+t^2x^3.\end{align*}
\begin{figure} [htbp]
\begin{tikzpicture}
[y= {(0.2cm,1cm)}, z={(0cm,0.5cm)}, x={(2cm,0cm)},scale=0.5]
\draw (0,0,1)--(0,0,-3);
\draw (0,1,1)--(0,1,-3);
\draw (1,3,3)--(1,3,-3);
\draw (1,2,3)--(1,2,-3);
\draw (1,1,3)--(1,1,-3);
\draw (1,0,3)--(1,0,-3);
\draw (2,0,2)--(2,0,-3);
\draw (2,1,2)--(2,1,-3);
\draw (2,2,2)--(2,2,-3);
\draw (3,0,-2)--(3,0,-3);

\filldraw[gray](0,0,1)--(1,0,3)--(1,3,3)--(0,1,1)--cycle;
\filldraw[gray](2,0,2)--(2,2,2)--(1,3,3)--(1,0,3)--cycle;
\filldraw[gray](2,2,2)--(2,0,2)--(3,0,-2)--cycle;
\draw[thick](2,2,2)--(2,0,2);
\draw[thick](1,3,3)--(1,0,3);
\draw[thick](0,0,1)--(0,1,1);
\draw (1.5,-3) node{$\mathrm{(A)}$}; 
\end{tikzpicture}
\qquad
\begin{tikzpicture}[scale=1.1]
\draw[fill=gray](0,0)--++(0,1)--++(1,2)--++(1,-1)--++(1,-2)--cycle;
\draw[very thick](1,0)--++(0,3);
\draw[very thick](2,0)--++(0,2);
\draw[thick](0,0)--++(0,1);
\draw(0.5,0.5) node {$d(A_1)$};
\draw(1.5,0.5) node {$d(A_2)$};
\draw(2.5,0.5) node {$d(A_3)$};

\draw[densely dashed] (0,0) grid (3,3);
\draw (1.5,-0.35) node{$\mathrm{(B)}$}; 
\end{tikzpicture}
\qquad
\begin{tikzpicture}[scale=0.7]

\draw[thick](3,6.5)--++(8,0);
\draw(3,7.5)--++(2,-1)--++(0,-2);
\draw(8,4.5)--++(0,2)--++(2,2);
\draw(11,4.5)--++(0,2)--++(2,1); 
\draw (5,6.5) node {$\bullet$} ;
\draw (5,6.5) node[above] {$A_1$} ;
\draw (7,6.5) node {$\bullet$} ;
\draw (7,6.5) node[below] {$P$} ;
\draw (8,6.5) node {$\bullet$} ;
\draw (8,6.5) node[above] {$A_2$} ;
\draw (11,6.5) node {$\bullet$} ;
\draw (11,6.5) node[above] {$A_3$} ;

\draw (4,5.5) node {$1$};
\draw (2,7) node {$1+Y$};
\draw (6.5,7.5) node {$3+X+3Y$};
\draw (6.5,5.5) node {$3+X$};
\draw (9.5,5.5) node {$2+2X$};
\draw (12,5.5) node {$3X-2$};
\draw (11,7.5) node {$2+2X+2Y$};
\draw (6.5,3.25) node{$\mathrm{(C)}$}; 
\end{tikzpicture}
\qquad
\begin{center}
\caption {The extended Newton
  polyhedron $\widetilde{\A}$ of the  curve $C'$ (Example \ref{secondexample}) is drawn in $\mathrm{(A)}$. The projection of its faces gives us the subdivision of the
  Newton polygon of $C'$; see $\mathrm{(B)}$. The tropical curve $\Trop(C')$ is drawn
  in $\mathrm{(C)}$. The vertices $A_1,A_2,A_3$ have coordinates
  $(-2,0),(1,0),(4,0)$. The edge $A_1A_2$ has weight $3$, while
  the edge $A_2A_3$ has
  weight $2$. The point $P$ is $(0,0)=\Val((1,1))$. 
}
\label{example3}
\end{center}
\end{figure}
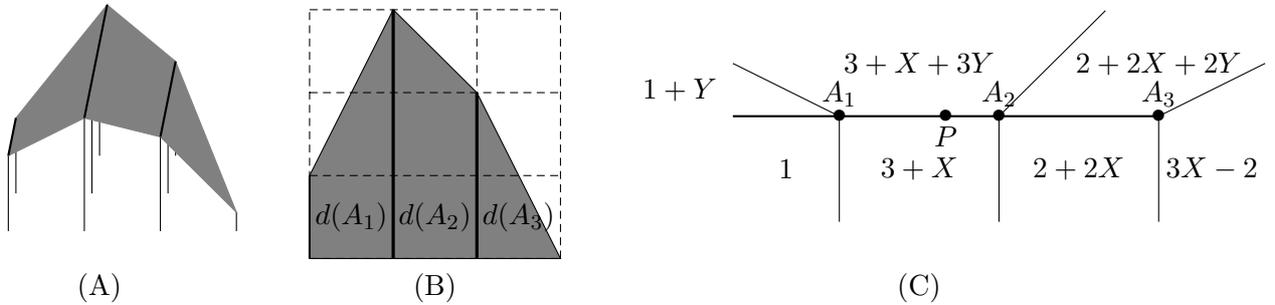

The curve $\Trop (C')$ is equal to the set of non-smooth points of the
function $$\Trop (F)=
\max(3+X+3Y,3+X+2Y,3+X+Y,3+X,2+2X+2Y,2+2X+Y,2+2X,1+Y,1,3X-2).$$ 
\end{ex}

The plane is divided by $\Trop(C')$ into
regions corresponding to the vertices of $\widetilde \A$. In Figure
\ref{example3}, the value of $\Trop (F)(X,Y)$ is written on each region. For example, $3X-2$ corresponds to the
vertex $(3,0,-2)$ of $\widetilde\A$.

A {\it tropical curve} $H\subset \TT^2$ is the non-smooth locus of a
function \eqref{eq_tropical} with finite $\A\subset\ZZ^2$. 
\begin{rem}
The tropical curves defined by the equations $\max (x,y,0)$ and $\max (2x,2y,0)$ coincide as sets, but the weights of the edges of the second curve are equal  to $2$, whereas for the first curve the weights of its edges are equal to $1$.
\end{rem}

Given a tropical curve $H$ as a subset of $\TT^2$ with weights on its edges (as we always assume in this paper), we can construct an equation, defining $H$. Then we construct the extended Newton polyhedron for $H$, using the same formula as for algebraic curves. The function defining $H$ is not unique, therefore the extended Newton polyhedron  for $H$ is defined up to a translation.

\begin{rem}\label{remark_passing}
When we pass from the set $\{(i,j,\val(a_{ij}))\}$ to $\widetilde \A$,
some information is lost. Nevertheless, we do not suppose that all
the points $\{(i,j,\val(a_{ij}))\}$ belong to the boundary of
$\widetilde \A$. 
\end{rem}

The reader should be familiar with the notions mentioned above,
or is kindly requested to refer to \cite{BIMS,oberwolfach,maclagan2015introduction}.

\subsection{Change of coordinates and $m$-fold points}
\begin{defi}
If the lowest term in the Taylor
expansion of $F$ at a point $p$ has degree $m$, then $m=\mu_p(C)$ is called the
{\it multiplicity} of $p$. The point $p$ is called an $m$-fold point
or a point of multiplicity $m$.
\end{defi}

Another way to say the same thing is to define $\mu_p(C)$
for $p=(p_1,p_2)$ as the maximal $m$ such that the polynomial $F$ belongs to the $m$-th
power of the ideal of the point $p$, i.e., $F\in\langle
x-p_1,y-p_2\rangle^m$ in the local ring of the point $p$.

\begin{ex}
The condition for a point $p$ to be of multiplicity one on $C$ means
that $p\in C$. Multiplicity greater than one implies that $p$ is a singular
point of $C$.
\end{ex}

\begin{ex}
Consider a curve $C'$ of degree $d$ given by an
equation  $$G(x,y)=\sum b_{ij}x^iy^j, 0\leq i,j,i+j\leq d.$$ The point
$(0,0)$ is of multiplicity at least $m$ on the
curve $C'$ if and only if $b_{ij}=0$ for all $i,j$ with $i+j<m$.  As a consequence,
for a given point $p\in (\KK^*)^2$, the condition that $\mu_p(C')\geq m$ can be
rewritten as a certain system of $\frac{m(m+1)}{2}$ linear equations in the
 coefficients $\{b_{ij}\}$ of $G$.
\end{ex}

\begin{ex}
Refer to Example \ref{secondexample}.
The point $p=(1,1)$ is a point of multiplicity $m=3$ on the curve $C'$.
This affects the subdivision of the Newton polygon of $C'$ in the following way:
\begin{itemize} 
\item The point $P=(0,0)$ belongs to an edge $E$ of the weight $m=3$.  
\item The sum of the areas of the faces dual to the vertices of
  $\Trop(C')$ on the extension of $E$ is $2+5/2+1=11/2$, which is greater than
$m^2/2=3^2/2$. 
\end{itemize}
\end{ex}
These two facts are particular incarnations of \hyperref[exertion_edges]{the Exertion Theorem for edges}.

\begin{lemma}
\label{toric_action}
Suppose $ad-bc=1$ where $a,b,c,d\in \ZZ$. The transformation $\Psi:(x,y)\mapsto (x^ay^b,x^cy^d)$ preserves multiplicity at the point $p=(1,1)$,
i.e., $\mu_{(1,1)}(C)=\mu_{(1,1)}(\Psi(C))$.
\end{lemma}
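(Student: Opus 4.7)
The plan is to observe that since $ad-bc=1$, the map $\Psi$ is an automorphism of the algebraic torus $(\KK^*)^2$ which fixes the point $(1,1)$. Thus it induces a $\KK$-algebra automorphism of the local ring at $(1,1)$, which must preserve the maximal ideal $\mathfrak{m}_{(1,1)} = \langle x-1, y-1\rangle$ and all its powers. Since $\mu_{(1,1)}(C)$ is characterised as the largest $m$ such that $F\in\mathfrak{m}_{(1,1)}^m$, this already gives the claim conceptually. The task is to check this by elementary algebraic manipulation.

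Concretely, I would first observe the key computational fact: the binomial expansion of $x^ay^b-1$ around $(1,1)$ begins as $a(x-1)+b(y-1)+(\text{higher order})$, so $x^ay^b-1\in\langle x-1,y-1\rangle$, and similarly $x^cy^d-1\in\langle x-1,y-1\rangle$. (For negative exponents one expands in $\KK[[x-1,y-1]]$ or works in the localisation at $(1,1)$; either way the conclusion survives because $x$ and $y$ are units at $(1,1)$.) Therefore, if $F(u,v)\in\langle u-1,v-1\rangle^m$, then the substitution $u\mapsto x^ay^b$, $v\mapsto x^cy^d$ sends $F$ into $\langle x^ay^b-1,\,x^cy^d-1\rangle^m\subseteq\langle x-1,y-1\rangle^m$. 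This yields one inequality,
\[
\mu_{(1,1)}(\Psi(C))\ \geq\ \mu_{(1,1)}(C).
\]

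For the reverse inequality, I would invoke the fact that $\Psi^{-1}$ has exactly the same form: its matrix is $\begin{pmatrix} d & -b\\ -c & a\end{pmatrix}$, which still has determinant $ad-bc=1$ with integer entries. Applying the inequality already proved to $\Psi^{-1}$ and to the curve $\Psi(C)$ gives $\mu_{(1,1)}(C)=\mu_{(1,1)}(\Psi^{-1}\Psi(C))\geq\mu_{(1,1)}(\Psi(C))$, completing the proof.

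There is no real obstacle here; the only subtlety worth a sentence is to make sure the substitution argument is justified when $b$ or $c$ is negative, since $x^ay^b$ is a Laurent monomial. This is handled either by working in the local ring $\mathcal{O}_{(\KK^*)^2,(1,1)}$, where $x$ and $y$ are units, or equivalently by writing $x^{-1}-1=-(x-1)/x\in\langle x-1\rangle$ and similarly for $y$, so that $x^ay^b-1$ lies in $\mathfrak{m}_{(1,1)}$ for all $a,b\in\ZZ$.
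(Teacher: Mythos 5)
Your proof is correct and follows essentially the same route as the paper: both reduce the claim to showing $x^ay^b-1,\,x^cy^d-1\in\langle x-1,y-1\rangle$ in the local ring at $(1,1)$ via the binomial expansion (handling negative exponents by multiplying by units), and both obtain the reverse containment by applying the same argument to $\Psi^{-1}$, whose matrix is again integral with determinant $1$. The only difference is presentational — you phrase it as two inequalities while the paper states the ideal equality $\langle x-1,y-1\rangle=\langle x^ay^b-1,x^cy^d-1\rangle$ directly — but the substance is identical.
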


\begin{proof} Indeed, $\Psi$ defines an isomorphism in
  the local ring of $p=(1,1)$. One can prove this  by verifying that $\langle
  x-1,y-1\rangle=\langle x^ay^b-1,x^cy^d-1\rangle$ in the local ring
  of $p$.
\end{proof}

\begin{defi}
A map $f$ {\it tropicalizes} to a map $\Trop(f)$ if the
following diagram is commutative:
$$
\begin{CD}
\KK^2 @>f>> \KK^2\\
@VV{\Val}V @VV{\Val}V\\
\TT^2 @>{\Trop (f)}>> \TT^2
\end{CD}
$$

\end{defi}

\begin{prop}
\label{tropic_action}
A map $\Psi:(x,y)\mapsto (x^ay^b,x^cy^d)$ tropicalizes to the integer affine map
$\Trop(\Psi):(X,Y)\mapsto (aX+bY,cX+dY)$. \qed
\end{prop}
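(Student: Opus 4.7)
The plan is to verify commutativity of the stated diagram by a direct coordinate-wise computation, using only the multiplicativity axiom $\val(uv)=\val(u)+\val(v)$ of the valuation. Since $\Psi$ and $\Val(\Psi)$ both operate coordinate by coordinate (the first output coordinate depending only on the first coordinate of the target, etc.), it suffices to check each of the two coordinates separately.

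First I would record the elementary consequence $\val(u^n)=n\val(u)$ for every integer $n\in\ZZ$ and every $u\in\KK^*$. For $n>0$ this follows from multiplicativity by induction; for $n=0$ it follows from $\val(1)=\val(1\cdot 1)=2\val(1)$, hence $\val(1)=0$; for $n<0$ it follows from $0=\val(1)=\val(u^n\cdot u^{-n})=\val(u^n)+(-n)\val(u)$. Then for any $(x,y)\in(\KK^*)^2$ and any integers $a,b$,
\[
\val(x^a y^b)=\val(x^a)+\val(y^b)=a\val(x)+b\val(y),
\]
and analogously $\val(x^c y^d)=c\val(x)+d\val(y)$.

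Combining these, the composition $\Val\circ\Psi$ sends $(x,y)$ to $(a\val(x)+b\val(y),\,c\val(x)+d\val(y))$, which is exactly the image of $(\val(x),\val(y))=\Val(x,y)$ under the integer affine map $(X,Y)\mapsto(aX+bY,cX+dY)$. Thus $\Val\circ\Psi=\Val(\Psi)\circ\Val$, proving the proposition. There is no substantive obstacle here; the only subtlety is the justification of the power rule for negative exponents, which is why $\Psi$ is defined on $(\KK^*)^2$ rather than on all of $\KK^2$.
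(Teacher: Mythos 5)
Your computation is correct and is exactly the argument the paper has in mind; the paper simply omits it (the proposition is stated with a \qed and no proof), treating $\val(x^ay^b)=a\val(x)+b\val(y)$ as an immediate consequence of multiplicativity. Your careful justification of the power rule for negative exponents, and the observation that this is why one works over $(\KK^*)^2$, fills in the only detail worth recording.
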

We define a new curve
$C'$ given by the equation $G(x,y)=0$, where
$G(x,y)=F(\Psi(x,y))$. Then the Newton polygon of $C'$ is the image of
$\Delta$ under 
$\bigl(\begin{smallmatrix}
a & c\\b & d
\end{smallmatrix}\bigr) \in SL(2,\ZZ)$, the same holds for the extended Newton
polyhedron, and $\Trop(C')=\Trop(\Psi)(\Trop(C))$. 
\begin{prop}
\label{translation1}
A map $\Psi_{r,q}: (x,y)\mapsto (rx,qy)$ with $r,q\in\KK^*$ tropicalizes to the map
$\Trop(\Psi_{r,q}):(X,Y)\mapsto(X+\val(r),Y+\val(q))$.\qed
\end{prop}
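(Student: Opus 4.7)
The plan is to verify directly that the diagram in the definition of ``tropicalizes'' commutes, using only the multiplicativity property of the valuation. Concretely, I need to check that for every $(x,y) \in (\KK^*)^2$, the equality
$$\Val(\Psi(x,y)) = \Val(\Psi)(\Val(x,y))$$
holds, where the right-hand side is interpreted with $\Val(\Psi)$ as the candidate tropicalization $(X,Y)\mapsto(X+\val(r),Y+\val(q))$.

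First I would compute the left-hand side componentwise: $\Psi(x,y)=(rx,qy)$, so applying the coordinate-wise valuation gives $(\val(rx),\val(qy))$. Next, I would invoke the first axiom from the definition of a valuation map, $\val(ab)=\val(a)+\val(b)$, to conclude $\val(rx)=\val(r)+\val(x)$ and $\val(qy)=\val(q)+\val(y)$. Matching this with the effect of the claimed tropical map on $(X,Y)=(\val(x),\val(y))$ completes the verification. Since $r,q\in\KK^*$, the quantities $\val(r)$ and $\val(q)$ lie in $\RR$ (not $-\infty$), so $\Val(\Psi)$ is indeed a well-defined integer affine translation on $\TT^2$.

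There is no real obstacle here; the content of the proposition is exactly the multiplicativity axiom of $\val$ applied twice, one coordinate at a time. The only thing to note is that one should take $x,y\in\KK^*$ so that the valuations are finite and the diagram makes sense on the torus, which is the natural setting in which we apply this statement when it is used together with Lemma~\ref{toric_action} and Proposition~\ref{tropic_action} to perform toric changes of coordinates.
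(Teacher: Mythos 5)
Your verification is correct and is exactly the argument the paper intends: the proposition is stated with an immediate \qed because it follows directly from the axiom $\val(ab)=\val(a)+\val(b)$ applied coordinate-wise, which is precisely what you do. No discrepancy with the paper's (omitted) proof.
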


For $G(x,y)=\sum a'_{ij}x^iy^j$ defined as $G(x,y)=F(\Psi_{r,q}(x,y))$, an easy
computation gives $\val(a_{ij}')=\val(a_{ij}) + l(i,j)$ with
$l(i,j)=i\cdot \val(r)+j\cdot \val(q)$. This adds  the
linear function $l(i,j)$ to the extended Newton
polyhedron $\widetilde {\A}$,  therefore the subdivision of the Newton polygon
for $G$ coincides with the subdivision for $F$. This is not surprising
because of Proposition \ref{trop_prop} and the fact that $\Trop(\Psi_{r,q})$ is a
translation. Thus, $SL(2,\ZZ)$-invariant properties of the subdivision of $\Delta$ for
the curve $C$ with $\mu_{p}(C)=m$ for a
given point $p\in (\KK^*)^2$ do not depend on the point
$p$.  

\subsection{Lattice width and $m$-thick sets}

{\it Lattice width} is the most frequent notion in our arguments,
already proved to be a practical tool elsewhere. For example, the article
\cite{Castryck:2012rt} uses it to estimate the gonality of
a general curve with a given Newton polygon.
The minimal genera of surfaces dual to a given 1-dimensional
cohomology class in a three-manifold are related to the
lattice width of the Alexander polynomial of this class (\cite{friedl2008twisted,mcmullen2002alexander}). A good
survey of lattice geometry and related problems can be found in
\cite{barany}.

\begin{defi}
\label{def_dir} We denote by $\dirr$ the set of all directions in $\ZZ^2$. Each direction $u$ has a representative $(u_1,u_2)\in \ZZ^2$ with $\mathrm{gcd}(u_1,u_2)=1$. We will write $u\sim (u_1,u_2)$ in this case.
\end{defi}

Let us consider a compact set $B \subset \RR^2$.

\begin{defi}
\label{lattice_width}
{\it The lattice width} of  $B$ in a direction $u \in\dirr$ is
 $\omega_u(B)=\max\limits_{x,y\in B}(u_1,u_2)\cdot (x-y),$ where $u\sim (u_1,u_2).$
{\it The minimal lattice width} $\omega(B)$ is defined to be
$\min\limits_{u\in\dirr}\omega_u(B)$.
\end{defi}

Consider an interval $I$ with rational slope. Let $(p,q)\in\ZZ^2$ be a
primitive vector in the direction of $I$. The {\it  lattice length} of $I$ is its
Euclidean length divided by $\sqrt{p^2+q^2}$.

\begin{defi}
\label{mthick}
A set $B\subset \RR^2$ is called {\it $m$-thick} in the following cases:
\begin{itemize}
\item $B$ is empty,
\item $\conv(B)$ is 1-dimensional with rational slope and its lattice length is at least $m$,
\item $\conv(B)$ is 2-dimensional and for each $u\in \dirr$, if $\omega_u(\conv(B))=m-a_u$ with $a_u>0$, then $\conv(B)$ has two sides of
  lattice length at least $a_u$ and these sides are perpendicular to $u$.
\end{itemize}
\end{defi}

The relation between $m$-thickness and Euler derivatives is discussed in
Remark \ref{euler}.

\begin{prop}
\label{two_vertical}
If $B\subset \ZZ^2$ is $m$-thick and $\conv(B)$ is a polygon with at most
one vertical side, then $\omega_{(1,0)}(B)\geq m$. If $B$ is
$m$-thick and  $\conv(B)$ is a polygon without parallel sides, then $\omega(B)\geq m$. \qed
\end{prop}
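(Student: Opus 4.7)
The proof should be a direct application of Definition \ref{mthick} by contrapositive; both parts of the proposition are essentially unwindings of the $m$-thickness condition, so I do not expect any substantial obstacle.

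For the first statement, I would assume for contradiction that $\omega_{(1,0)}(B) < m$. Writing $\omega_{(1,0)}(B) = m - a$ with $a > 0$ (where we may take $B$ in place of $\ZZ\text{-ConvexHull}(B)$ since $B$ is already a $\ZZ$-convex polygon), the third bullet of Definition \ref{mthick} applied with $u = (1,0)$ forces $B$ to have two sides of integer length at least $a \geq 1$ perpendicular to $(1,0)$. But sides perpendicular to $(1,0)$ are vertical sides, so $B$ must have at least two vertical sides, contradicting the hypothesis.

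For the second statement, I would again argue by contraposition. Suppose $\omega(B) < m$. By Definition \ref{lattice_width}, there exists $u \in \ZZ^2$ with $\omega_u(B) < m$, say $\omega_u(B) = m - a_u$ with $a_u > 0$. Applying the third bullet of Definition \ref{mthick} to this $u$, we obtain two sides of $B$ of positive integer length, both perpendicular to $u$. Any two sides perpendicular to the same vector $u$ are parallel to each other, so $B$ has two parallel sides, again contradicting the hypothesis.

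A minor bookkeeping point I would address is the case when $\mathrm{ConvexHull}(B)$ is one-dimensional or empty: in the empty case both statements are vacuous, and in the one-dimensional case an $m$-thick segment has integer length at least $m$, so its lattice width in any direction not perpendicular to it is at least $m$, while the second statement has no content since a segment has no two sides. With these degenerate cases handled, the two-dimensional argument above completes the proof.
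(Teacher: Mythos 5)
Your argument is correct and is exactly the unwinding of Definition \ref{mthick} that the paper intends: the proposition is stated with no written proof (it is marked as immediate), and the contrapositive reading of the third bullet of the definition --- two vertical sides in the first case, two parallel sides in the second --- is the whole content. Your remark on the degenerate cases is harmless but unnecessary, since the statement concerns a (two-dimensional) convex polygon.
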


\begin{lemma} \label{vertical_sides}
If $\mu_{(1,1)}(C)=m$ and $\omega_u(\A)=m-a$ for some $a>0, u\sim(u_1,u_2)$, then $C$
contains a rational component parametrized as $(s^{u_1},s^{u_2})$. 
\end{lemma}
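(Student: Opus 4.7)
The plan is to reduce to a standard direction by a toric change of coordinates, extract the rational component by a restriction-to-a-line argument, and then iterate this extraction to obtain the Minkowski-sum structure that yields $m$-thickness. By Lemma \ref{toric_action} and Proposition \ref{tropic_action}, an $SL(2,\ZZ)$-monomial change of coordinates preserves $\mu_{(1,1)}$ and transforms supports unimodularly, hence preserves lattice widths up to relabeling of directions. Assuming $u$ is primitive (the general case reduces by extracting $\gcd(u_1,u_2)$), we may replace $u$ by $(1,0)$, so $\omega_{(1,0)}(\A) = m - a$; showing that $\{y = 1\}$ is a component of $C$ in these coordinates will then give, after pulling back, the desired rational component $(s^{u_1}, s^{u_2})$.

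For the extraction, let $i_{\min}$ be the minimal $x$-exponent appearing in $\A$; then $x^{-i_{\min}} F(x, 1)$ is an ordinary polynomial in $x$ of degree at most $\omega_{(1,0)}(\A) = m - a$. Since $F$ lies in $(x-1, y-1)^m$ at $(1,1)$, setting $y = 1$ forces $F(x, 1)$ to vanish to order $\geq m$ at $x = 1$, and the same holds for $x^{-i_{\min}} F(x, 1)$ because the monomial prefactor is a unit there. A polynomial of degree at most $m - a < m$ vanishing to order $\geq m$ at a point is necessarily the zero polynomial, so $F(x, 1) \equiv 0$ and $(y - 1) \mid F$. The quotient $F_1 := F/(y-1)$ is a Laurent polynomial, and $\{y = 1\}$, parameterized as $(s, 1)$, is a rational component of $C$.

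For the $m$-thickness, I would iterate. The factor $y - 1$ has multiplicity exactly $1$ at $(1,1)$, so $\mu_{(1,1)}(F_1) = m - 1$; by multiplicativity of Newton polytopes under products, $\Delta(F) = \Delta(F_1) + [(0,0), (0,1)]$ as a Minkowski sum, and $\omega_{(1,0)}(\Delta(F_1)) = m - a$ is unchanged, leaving a deficit $a - 1$ for $F_1$. If $a > 1$, reapply the extraction to $F_1$, and continue. After $a$ such iterations $F = (y - 1)^a G$ with $\Delta(F) = \Delta(G) + a \cdot [(0,0), (0,1)]$. The Minkowski summand forces the leftmost and rightmost edges of $\Delta(F)$ --- which are perpendicular to $u = (1, 0)$ --- to each have integer length at least $a$; this is the $m$-thickness condition in direction $u$. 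Running the same argument for every primitive direction with positive deficit yields the $m$-thickness of $\Delta(\A)$.

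The main obstacle will be the Minkowski-sum bookkeeping: one must verify that both perpendicular sides of $\Delta(F)$ really have length $\geq a_u$ even when $\Delta(G)$ carries its own edges perpendicular to $u$ (they only lengthen the sides, but this merits a brief case check). A minor additional case concerns a one-dimensional $\Delta(\A)$; there the argument degenerates to the univariate statement $(x-1)^m \mid F$ (after a coordinate change), which fulfills the integer-length clause of Definition \ref{mthick} directly.
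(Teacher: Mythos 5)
Your proposal is correct and follows essentially the same route as the paper's proof: reduce to $u=(1,0)$ by a unimodular monomial change of coordinates, restrict to $y=1$ where degree $m-a<m$ versus vanishing order $m$ forces $(y-1)\mid F$, and iterate to extract $(y-1)^a$, which produces the two vertical sides of length at least $a$. You merely make explicit what the paper compresses into ``otherwise we can repeat the above argument,'' tracking the drop in multiplicity and the Minkowski-sum decomposition of the Newton polygon at each step.
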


\begin{proof} By Lemma~\ref{toric_action}, it is enough to prove
this lemma only for $u=(1,0)$. The degree of the polynomial $F(x,1)$ is $m-a$, but $F(x,1)$ has a root of
multiplicity $m$ at $1$, therefore $F$ is identically
zero on $y=1$, hence $F$ is divisible by $y-1$. 
Let $b$ be the maximal number such that $F$ is divisible by
$(y-1)^b$. Clearly $b\geq a$, otherwise we can repeat the above argument. Therefore $F$ is divisible by $(y-1)^a$, and both
vertical sides of $\conv(\A)$ have lattice length at least $a$. 
\end{proof}

\begin{cor}
\label{cor_mthick}
If $\mu_{(1,1)}(C)=m$, then the Newton polygon $\Delta$ of $C$ is $m$-thick.
\end{cor}

For a polynomial $G(x,y)=\sum b_{ij}x^iy^j$ we define its {\it support set} by
$\mathrm{supp}(G)=\{(i,j)|b_{ij}\ne 0\}.$

\begin{defi}
\label{amu}
For $\mu\in\RR$, denote by $\A_\mu$ the set $\{(i,j)\in\A|\val(a_{ij})\geq \mu\}$.
\end{defi}

The following lemma describes the set of valuations of the
coefficients $a_{ij}$ of $F(x,y)$. 
\begin{lemma}[$m$-thickness lemma]
\label{thickness_lemma}
\addcontentsline{toc}{subsection}{M-thickness Lemma}
If $\mu_{(1,1)}(C)=m$, then for each real number $\mu$ the set $\A_\mu$
is $m$-thick (Def.~\ref{mthick}).
\end{lemma}

\begin{proof} We will find a polynomial $G$ with
$\mathrm{supp}(G)=\A_\mu$, which defines a curve passing through
$(1,1)$ with multiplicity $m$. Then Corollary \ref{cor_mthick} concludes the proof.  Let us consider the set of linear
conditions in the coefficients $a_{ij}$ imposed by the fact that
$\mu_{(1,1)}(C)=m$. If there is no required polynomial $G$, then by setting all
the coefficients $a_{ij}$ to $0$ for $(i,j)\in\A\setminus\A_\mu$, we see
that the above system of linear equations would imply that $a_{i'j'}=0$ for some
$(i',j')\in\A_\mu$. That would mean that there exists an equation $\sum
\lambda_{ij}a_{ij}=a_{i'j'},\lambda_{ij}\in\QQ,(i,j)\in \A\setminus\A_\mu$ which is a
consequence of the above system. The latter
leads us to the contradiction, because for the polynomial $F$ we have
$\val(\lambda_{ij}a_{ij})<\mu\leq\val(a_{i'j'})$ for $(i,j)\in
\A\setminus\A_\mu$ (see Remark \ref{rem_max}). The attentive reader
can notice that $\A_\mu$ is a {\it flat} in the matroid
corresponding to the above linear conditions. Indeed, no dependent set
intersects $\A_\mu$ in exactly one element, because the valuation of
this element would be strictly bigger than the valuations of the other
elements in this dependent set. 
\end{proof}

\subsection{A lemma about concave functions}

Suppose that $h:[a,b]\to\RR$ is a concave and piecewise smooth function on the interval
$[a,b]$. Define $\hat{h}_{[a,b]}(x)$ as the length of the subinterval
of $[a,b]$ where
the values of $h$ are at least $h(x)$, i.e., $\hat{h}_{[a,b]}(x)= \mathrm{measure}\{y\in [a,b]| h(y)\geq h(x)\}$. 
\begin{lemma}
\label{function}
Suppose that $h$ attains its maximal value at a unique point.  Then
$\int_a^b\hat{h}_{[a,b]}(x)dx = (b-a)^2/2$.
\end{lemma}
\begin{proof} Without loss of generality $h(a)\geq h(b)=0$. Let $q$
be the point where the maximum of $h$ is attained. On the intervals $[a,q]$ and $[q,b]$
the function $h$ is invertible. Call the respective
inverses $f_1,f_2$, that is $f_1(h(x))=x$ for $x\in [a,q]$ and
$f_2(h(x))=x$ for $x\in [q,b]$. For $y\in[0,h(a)]$, we define
$f_1(y)=a$. Hence
$f_1(h(q))=f_2(h(q))=q, f_1(0)=a,f_2(0)=b$.
Let $H(y)=f_2(y)-f_1(y)$; note that $H(y)=\hat h(f_1(y))=\hat h(f_2(y))$.
Finally, we integrate $\hat{h}_{[a,b]}(x)$ along the $y$-axis. In
between, we change
the measure in the integration. The integral becomes
\[\int_a^b\hat{h}_{[a,b]}(x)dx=\int_{h(q)}^{0}
(h_2(y)-h_1(y))d(h_2(y)-h_1(y))
=\int^0_{h(q)} H(y)d(H(y)) =
\frac{H^2(0)}{2} = \frac{(b-a)^2}{2}.\qedhere\]
\end{proof}

\begin{cor}\label{cor_integral}
If $h(a')=h(b')$ for some $a'<b'$ in $[a,b]$, then $$\int_a^{a'}\hat{h}_{[a,b]}(x)dx+\int_{b'}^{b}\hat{h}_{[a,b]}(x)dx = \frac{1}{2}((b-a)^2-(b'-a')^2).$$
\end{cor}

\begin{proof}
We proceed as in the proof of the lemma, and \[\int_a^{a'}\hat{h}_{[a,b]}(x)dx+\int_{b'}^{b}\hat{h}_{[a,b]}(x)dx=\int^{0}_{h(a')} H(y)d(H(y))=\frac{1}{2}((b-a)^2-(b'-a')^2).\qedhere \]
\end{proof}

\begin{prop}
\label{concavity}
If $h$ is linear with non-zero slope on an interval
$[a',b']\subset [a,b]$, then the function $\hat{h}$ is concave on $[a',b']$.
\end{prop}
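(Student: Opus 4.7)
The plan is to reduce the problem to a one-dimensional ``layer cake'' manipulation. I define $L(c):=\mathrm{measure}\{s\in[a,b]\,:\,h(s)\geq c\}$, so that $\hat{h}_{[a,b]}(t)=L(h(t))$. Since $h$ is affine on $[x,y]$, the composition $L\circ h$ is automatically concave on $[x,y]$ as soon as $L$ itself is concave on its domain, because concavity is preserved under pre-composition with an affine map. Thus the whole proof boils down to showing that $L$ is concave as a function of $c$.

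For concavity of $L$, I use the concavity of $h$: the superlevel set $\{s\in[a,b]:h(s)\geq c\}$ is a single (closed) interval $[\alpha(c),\beta(c)]$, hence $L(c)=\beta(c)-\alpha(c)$. I split $h$ at its argmax (or argmax-plateau) $q$: on $[a,q]$ the function $h$ is concave and non-decreasing, with $\alpha$ its generalized inverse, and on $[q,b]$ it is concave and non-increasing, with $\beta$ its generalized inverse. A short argument from concavity of $h$ together with the monotonicity of the inverse gives that $\alpha$ is \emph{convex} and $\beta$ is \emph{concave} on their respective ranges. For instance, for $\beta$: concavity of $h$ on $[q,b]$ yields $h(\lambda x+(1-\lambda)y)\geq\lambda h(x)+(1-\lambda)h(y)$, and applying the \emph{decreasing} inverse reverses the inequality into concavity of $\beta$; the argument for $\alpha$ is analogous, with an increasing inverse. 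Thus $L=\beta-\alpha$ is a sum of two concave functions, hence concave, which combined with the first paragraph closes the proof.

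The main obstacle I anticipate is handling the boundary cases cleanly. For $c\leq h(a)$ one has $\alpha(c)=a$, and symmetrically $\beta(c)=b$ for $c\leq h(b)$; these constant extensions remain convex (resp.\ concave), and the one-sided slopes at the joining levels respect the correct inequalities (the slope of $\alpha$ jumps upward from $0$, compatible with convexity; the slope of $\beta$ jumps downward from $0$, compatible with concavity), so the global conclusion survives. One should also check that the linear piece $[x,y]$ cannot meaningfully straddle the argmax of $h$: if it did, concavity would force the slope on $[x,y]$ to be $0$, in which case $\hat{h}_{[a,b]}$ is constant on $[x,y]$ and the claim is trivial. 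Everything else is routine.
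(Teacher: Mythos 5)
Your proof is correct, and it rests on the same underlying decomposition as the paper's: split the superlevel set at the argmax into its two endpoints, and use that the inverse of an increasing concave function is convex while the inverse of a decreasing concave function is concave (your $\alpha,\beta$ are the paper's $h_1,h_2$). What you do differently is the packaging. You isolate the statement that the distribution function $L(c)=\mathrm{measure}\{s\in[a,b]:h(s)\geq c\}$ is concave in the level $c$ --- a fact the paper never states --- so that the linearity of $h$ on $[x,y]$ enters only through the trivial rule that a concave function precomposed with an affine one is concave. This buys you a uniform treatment of two points the paper glosses over: the case where $[x,y]$ straddles the argmax $q$ (the paper simply writes ``without loss of generality $x'<y'<q$''), and the junctions where $\alpha$ or $\beta$ passes from its constant extension to the genuine inverse branch, which you check via one-sided slopes. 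The paper instead verifies midpoint concavity directly on one side of $q$, using that $h_1\circ h$ is the identity there and that $h_2\circ h$ is concave on $[x,y]$ --- a claim that, when unwound, requires exactly the convexity of $h_1$ and concavity of $h_2$ in the level variable that you prove explicitly. Your intermediate lemma on $L$ is the cleaner and more robust formulation; the only superfluous step is your separate discussion of the straddling case, which your main argument already covers since precomposing the globally concave $L$ with an affine map needs no case distinction.
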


\begin{proof} Without loss of generality we suppose that
$a'<b', f(a')<f(b')$. It is enough to check that
$\hat{h}\big(\tfrac{1}{2}(x+y)\big)\geq\tfrac{1}{2}\big(\hat{h}(x)+\hat{h}(y)\big)$ for
$x,y\in [a',b']$.  Since $h\big(\tfrac{1}{2}(x+y)\big)=\tfrac{1}{2}(h(x)+h(y))$, we have

\begin{align*} \hat{h}\Big(\tfrac{1}{2}(x+y)\Big)&=f_2\bigg(h\Big(\tfrac{1}{2}(x+y)\Big)\bigg)-f_1\bigg(h\Big(\tfrac{1}{2}(x+y)\Big)\bigg)=f_2\bigg(h\Big(\tfrac{1}{2}(x+y)\Big)\bigg)-\tfrac{1}{2}\Big(f_1\big(h(x))+f_1(h(y)\big)\Big)\\
&\geq \tfrac{1}{2}\Big(f_2(h(x))+f_2(h(y))\Big)-\tfrac{1}{2}\Big(f_1(h(x))+f_1(h(y))\Big)=\tfrac{1}{2}\Big(\hat{h}(x)+\hat{h}(y)\Big),\end{align*}
because $f_2\circ h$ is
concave on the interval $[x,y]$. Note that linearity of $h$ and $f_1$
on $[x,y]$ is crucial, since $f_1$ has a negative
coefficient.  
\end{proof}

\section{Formulation of main theorems}
\label{formulation}
In this section, we state the main results of this paper. For the terminology of faces, vertices, edges, and the duality among them, refer to Proposition \ref{trop_prop}. 

\subsection{Influenced sets}
We consider a tropical curve $H\subset \TT^2$ and a point $Q\in H$.
\begin{defi}\label{defi_lq}Let $l_Q(u)$ be the line through $Q$ in the direction $u\in\dirr$. 
Take the connected component, containing $Q$, of the intersection
  $H\cap l_Q(u)$. We call this component {\it the long edge
    through $Q$ in the direction $u$} and denote it by $E_Q(u)$. 
\end{defi}

\begin{defi}
\label{def_I}
For each $u\in\dirr$ we denote by $\I_Q(u)$ the set of vertices of $H$ which belong
to the long edge $E_Q(u)$. Define $\I(Q)=\bigcup_{u\in\dirr}\I_Q(u)$.
\end{defi}

Note that $\I(Q)$ is not a multiset; it contains only one copy of
$Q$.  Examples of $\I(P)$ are presented in Figure \ref{governorship}. 
On the left we see one long edge $E_P((1,0))$ and
$\I(P)$ consists of 7 vertices, and above we see 7 corresponding
faces in the subdivision of $\Delta$. On the right,  we see long edges
$E_P((1,0)),E_P((0,1)),E_P((-1,1))$. Each of the long edges 
$E_P((1,2))$ and $E_P((-3,-2))$ consists of only one edge. In Example \ref{secondexample}, $E_P((1,0))$ is the union of
the horizontal edges of $\Trop(C')$ and $\I(P)$ is the set of all
vertices of $\Trop(C')$.

\begin{defi}
For a point $Q\in H$ we define $\Inf(Q)=\bigcup\limits_{V\in \I(Q)}d(V),$ the union of the faces of the Newton polygon of $H$, dual to the vertices in $\I(Q)$.
\end{defi}

\begin{defi}
\label{def_influence_edge}
For a point $Q
\in H$ which is not a vertex of $H$,
we define $$\area(\Inf(Q))=\sum\limits_{F\in\Inf(Q)}\area(F).$$
\end{defi}

Note that $\area(\Inf(Q))$ depends only on $H$ and does not depend on a
particular choice of an equation defining $H$. Also, if $Q$ belongs to an edge $E$ of $H$ and $Q$ is not a vertex of $H$,
then $\I_u(Q)=\I(Q)$ where $u$ is the direction of $E$. Indeed, for any other direction $v$ not collinear to $u$, 
the connected component of $Q$ in the intersection $H\cap
l_P(v)$ is just $Q$.

Recall that if $Q$ is a vertex of  $H$, then $d(Q)$ is a face dual to $Q$ in the
subdivision of $\Delta$.

\begin{defi}
\label{def_influence_vertex}
If $Q$ is a vertex of $H$, we define \begin{align*}\area(\Inf(Q))&=\sum\limits_{F\in \Inf(Q)}\area(F)+\area(d(Q)),\\
\area^{*}(\Inf(Q))&=\sum\limits_{F\in \Inf(Q)}\area(F).\end{align*}  
\end{defi}

From the point of view of combinatorics,
studying $\area^{*}(\Inf(Q))$ is more natural, whereas $\area(\Inf(Q))$ is
motivated by Nagata's conjecture (see \cite{2013arXiv1310.6684K} for details). The name $\Inf(P)$ is chosen because the linear constraints, imposed by
the fact $\mu_p(C)=m$, asymptotically {\it influence} (c.f. Remark
\ref{motivation}) the coefficients $a_{ij}$ where
$(i,j)\in\Inf(P), P=\Val(p)$.

\subsection{Multiplicity of a tropical point in the intermediate
  sense}
 
Consider a tropical curve $H$ given by a tropical polynomial
$\Trop(F)$. Using $\Trop(F)$, we construct the
extended Newton polyhedron $\widetilde\A$ for $H$. 

\begin{defi}
\label{def_wamu}
We denote by $\widetilde{\A}_\mu$  the $xy$-projection of the section of
$\widetilde\A$ by the plane $z=\mu$. 
\end{defi}
Note that $\A_\mu$ (Def.~\ref{amu}) is contained in $\widetilde\A_\mu$. In Figure \ref{extNewton} (below), the set $\widetilde\A_\mu$ is colored in gray.

\begin{defi}
\label{def_intermediate}
A point $P=(0,0)$ on the tropical curve $H$ is of
multiplicity at least $m$ {\it in the intermediate sense} (we write
$\mu^{\trop}_P(H)\geq m$) if for each $\mu\in
\RR$ the set $\widetilde\A_\mu$ is $m$-thick (Def.~\ref{mthick}).
\end{defi}
Using Proposition
\ref{translation1}, we can use this definition for any other point of $P\in\Val((\KK^*)^2)$, after an appropriate change of coordinates.
\begin{lemma}
\label{lemma_implication}
If $\mu_p(C)=m$ and $P=\Val(p)$, then $\mu^{\trop}_P(\Trop(C))\geq m$.
\end{lemma}

\begin{cor}
If a point $P$ on a tropical curve $H\subset \TT^2$ is of multiplicity at least $m$ in the $\KK$-extrinsic
sense (Def.~\ref{def_extrinsic}), then $P$ is of multiplicity at least $m$ for $H$ in the intermediate
sense.
\end{cor}
Unfortunately, this lemma does not immediately follow from Lemma \ref{thickness_lemma}. 

\subsection{Exertion Theorems}

If $\omega(\A)<m$, i.e., $\omega_u(\A)<m$ for some
$u\sim(u_1,u_2)$ (Def.~\ref{def_dir}), and $\mu_p(C)=m, p=(p_1,p_2)$,
then Lemma \ref{vertical_sides} asserts
that $C$ contains a rational component with parameterization
$(p_1s^{u_1},p_2s^{u_2})$. We also prohibit such cases on the tropical side of the story.
\begin{defi}
\label{admissible}
A tropical curve is {\it admissible} if the minimal lattice width
(Def.~\ref{lattice_width}) of its Newton polygon is at least $m$.
\end{defi}

The following theorems estimate the total area of the region of influence
 of $P$ in $\Delta$. The point $P$ {\it exerts} its influence on the faces whose area
is counted in the theorem, whence the name.

\begin{theo} [Exertion Theorem for edges]
\label{exertion_edges}
\addcontentsline{toc}{subsection}{Exertion Theorem for edges}
If $H$ is admissible, $\mu^{\trop}_P(H)=m$ (Def.~\ref{def_intermediate}), and
$P$ is not a vertex of $H$, then $\area(\Inf(P))\geq
\frac{1}{2}m^2$ (Def.~\ref{def_influence_edge}). Furthermore, if $P$ belongs to an edge $E \subset H$, then the lattice length of $d(E)$ is at least $m$.
\end{theo} 

In this case we see a collection of faces with parallel sides in the
subdivision of $\Delta$; see Figure \ref{governorship}(A).

\begin{theo} [Exertion Theorem for vertices]
\label{exertion_vertices}
\addcontentsline{toc}{subsection}{Exertion Theorem for vertices}
If $H$ is admissible, $\mu^{\trop}_P(H)=m$, and the point $P$ is a vertex of $H$, then $\area^{*}(\Inf)(P)\geq
\frac{3}{8}m^2$ and $\area(\Inf(P))\geq \frac{1}{2}m^2$ (Def.~\ref{def_influence_vertex}).
\end{theo}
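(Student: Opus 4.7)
The plan is to reduce via Proposition~\ref{translation1} to the case $P=(0,0)$, where Proposition~\ref{horizon} identifies $\Dep(P)$ with the set of edges of the subdivision along which the concave upper-envelope function $h\colon\Delta(\A)\to\RR$ of $\widetilde{\A}$ is constant. The face $d(P)$ then equals $\mathrm{ConvexHull}(\A_{\mu^*})$ with $\mu^*=\max h=\max_{(i,j)}\val(a_{ij})$, and it is $m$-thick by the M-thickness Theorem.

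The main analytic tool will be Lemma~\ref{function}, applied to the restrictions $h_j(i)=h(i,j)$ of $h$ to horizontal slices $S_j=\Delta(\A)\cap\{y=j\}$; the resulting integrals will be interpreted as total areas of faces incident to $\Dep(P)$-edges. By admissibility together with Proposition~\ref{two_vertical}, after an $SL(2,\ZZ)$-transformation I may assume $\omega_{(1,0)}(\Delta(\A))\geq m$, with the degenerate case in which $\Delta(\A)$ already has a pair of parallel sides handled separately by a direct argument via Lemma~\ref{vertical_sides}. On each $S_j$ the function $h_j$ is concave piecewise linear, its creases are precisely the edges of the subdivision that $S_j$ meets, and among them the $\Dep(P)$-edges are exactly those creases across which $h_j$ stays locally flat.

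For $\Inf(P)\geq m^2/2$, Lemma~\ref{function} yields $\int_{S_j}\hat{h_j}(i)\,di=|S_j|^2/2\geq m^2/2$ whenever $|S_j|\geq m$, and Proposition~\ref{concavity} guarantees that $\hat{h_j}$ is concave within each face of the subdivision. This allows the iterated integral $\iint \hat{h_j}(i)\,di\,dj$ to be rewritten, face by face, as a weighted sum of the areas of the faces incident to a $\Dep(P)$-edge; unwinding the constants then delivers $\Inf(P)\geq m^2/2$.

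For the sharper bound $^{*}\Inf(P)\geq 3m^2/8$, the area of $d(P)$ itself must be subtracted. The plan is to run the slice-integration in two transversal directions, perpendicular to two non-parallel sides of $d(P)$---two such sides exist because $d(P)$ is $2$-dimensional and $m$-thick---and to optimise between the two resulting lower bounds. The hard part will be this two-direction optimisation, where the constant $3/8$ is sharp: the extremal configuration is a roughly $(m/2)\times(m/2)$ region $d(P)$, for which each directional slice-integral equals $m^2/2$ with exactly $m^2/8$ absorbed by $d(P)$. A careful case analysis, combined with the $m$-thickness of $\A_{\mu^*}$ used to control the shape of $d(P)$ and to force enough $\Dep(P)$-adjacent area on each side, should show that this extremal configuration governs the general case and yields the stated bound.
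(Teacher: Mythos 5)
Your reduction to the $m$-thick face $d(P)=\mathrm{ConvexHull}(\A_{\mu^*})$ and your idea of harvesting area from the faces attached to $d(P)$ by the superlevel-set technique of the edge case are exactly the paper's strategy: this is the content of the Preparation Lemma (Lemma \ref{additional}), which for each direction $u$ yields $\sum_{Q\in infl_u(P),Q\ne P}\mathrm{area}(d(Q))\geq \frac12 def_u(d(P))^2$. But the decisive quantitative step is missing, and the step you do sketch is set up incorrectly. First, the ``two-direction optimisation'' cannot work with two directions: if $d(P)$ is a long thin parallelogram of width $a\ll m$, controlling its width (or defect) in only two directions gives no lower bound of order $m^2$ on its area plus the harvested contributions -- a thin diagonal strip has width $\approx m$ in two independent directions and arbitrarily small area. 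The paper's proof genuinely needs \emph{three} directions: after normalising so that the minimal width $a$ of $d(P)$ is horizontal and the skew $b$ of the resulting ``initial parallelogram'' $MNKL$ satisfies $0\le b<a$, it plays off the directions $(0,1)$ and $(1,1)$ against each other (which one contributes depends on $b$; see Lemmata \ref{area2} and \ref{area1}), and the inequalities $\mathrm{area}(B)+\frac12\sum_u def_u(B)^2\geq\frac38 m^2$ and $2\,\mathrm{area}(B)+\frac12\sum_u def_u(B)^2\geq\frac12 m^2$ for $m$-thick $B$ are obtained by an explicit minimisation over the parameters $b,x,y,x_1,x_2,y_1,y_2$. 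Your ``careful case analysis \dots should show that this extremal configuration governs the general case'' defers precisely this computation, which is the entire content of the paper's section on the two combinatorial lemmata; note also that the true extremal configuration is the triangle $(0,0),(k,2k),(2k,k)$ of area $\frac38 m^2$ with all defects zero, not an $(m/2)\times(m/2)$ square. The same three-direction input is already needed for your $\Inf(P)\geq\frac12 m^2$ claim: a single-direction slice argument gives only $2a(m-a)+\frac12(m-a)^2$, which degenerates to $0$ as $a\to m$ unless you separately bound $\mathrm{area}(d(P))$ from below.

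Two further points in your setup would fail as written. The identity $\int_{S_j}\hat{h_j}=|S_j|^2/2$ per slice is correct, but the iterated integral $\iint\hat{h_j}\,di\,dj\sim\int|S_j|^2\,dj$ scales like $m^3$, so it cannot be ``rewritten face by face as a weighted sum of the areas'' contributing to $\Inf(P)$, which is of order $m^2$; in the paper the superlevel-set function enters with the opposite sign, via $f(x)\geq m-\hat g(x)$ where $f$ interpolates the integer lengths of the dual edges in $\Dep(P)$ and the areas are recovered from trapezoids, and the projection is onto the $xz$-plane rather than a family of slices. Also, for $P=(0,0)$ the edges of $\Dep(P)$ are the projections of the \emph{horizontal} edges of $\widetilde{\A}$ (those along which $h$ is constant); such an edge may cross a slice $S_j$ transversally, in which case $h_j$ has an ordinary kink there and is not locally flat, so your characterisation of the $\Dep(P)$-edges as the flat creases of $h_j$ is not correct in general.
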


Here we will see a collection of faces like in Figure
\ref{governorship}(B). The Exertion theorems are valid only for admissible curves. The following example
illustrates this problem. 
\begin{ex}
\label{counterexample}
Consider a curve $C'$ defined by the polynomial 
$F_k(x,y)=(x-1)^k(y-1)^{m-k}$. Clearly, $\mu_{(1,1)}(C') = m$ but the curve
$\Trop(C')$ is not admissible. The Newton polygon of $F_k$ is the
rectangle with vertices $(0,0),(k,0),(0,m-k),(k,m-k)$, it is
$m$-thick and its area is $k(m-k)$ which is always less than
$\frac{3}{8}m^2$. The curve $C'$ consists of the line $x=1$ with
multiplicity $k$ and the line $y=1$ with multiplicity $m-k$. The
tropical curve
$\Trop(C')$ consists of the vertical line of weight $k$ and the
horizontal line of weight $m-k$. Note that Lemma~\ref{lemma_implication} holds in this example.
\end{ex}

\section{Intrinsic definition of a tropical $m$-fold point}

The {\it multiplicity} $m(P)$ of the point $P$ of the intersection of two lines in directions $u,v\in\dirr$ is  $|u_1v_2-u_2v_1|$ where $u\sim (u_1,u_2),v\sim (v_1,v_2)$ (Def.~\ref{def_dir}).

Given two tropical curves $A,B\subset \TT^2$ we define their {\it stable intersection} as
follows. Let us choose a generic vector $v$. Then we consider the curves $T_{tv}A$
 where $t\in\RR, t\to 0$ and $T_{tv}$ is translation by
the vector $tv$. For a generic small positive $t$, the intersection $T_{tv}A\cap B$ is
transversal and consists of points $P_i^t, i =1,\dots,k$ with multiplicities $m(P_i^t)$. 

\begin{defi}[c.f. \cite{sturm}]
For each connected component $X$ of $A\cap B$, we define {\it the
  local stable intersection of $A$ and $B$ along $X$} as
$A\cdot_XB=\sum_i m(P_i^t)$ for $t$ close to zero, where the sum
runs over $\{i|
\lim_{t\to 0} P_i^t\in X\}$. For a point $Q\in A$, we define $A\cdot_QB$ as
$A\cdot_XB$, where $X$ is the connected component of $Q$ in the
intersection $A\cap B$.
\end{defi}

\begin{defi}
A generalized tropical line is the non-smooth locus of a function
\eqref{eq_tropical} with $\A\subset \ZZ^2$ such that $\A$ is an
interval of lattice length $1$ or  $|\A|= 3,\area(\conv(\A))=\frac{1}{2}$.
\end{defi}

\begin{prop}
\label{prop_intersection}
Let $Q$ be a vertex of a tropical curve $H$. If the face $d(Q)$ has no vertical sides, and $L$ is the usual horizontal line through $Q$, then $H\cdot_QL=\omega_{(1,0)}(d(Q))$.
\end{prop}
\begin{proof}
This follows from a direct computation and Proposition \ref{trop_prop}.
\end{proof}

\begin{defi}
\label{def_intrinsic}
A point $P$ on a tropical curve $H$ is of
multiplicity at least $m$ {\it in the intrinsic sense} if for each generalized tropical line $L$ through $P$ we have $L\cdot_PH\geq m$.
\end{defi}

\begin{rem}
Given $Q\in H$, we call {\it the
  tangent cone $TC(Q)$ at $Q$} the connected component of $Q$ in the intersection $H\cap\bigcup_{u\in\dirr}\{l_Q(u)\}$ (Def.~\ref{defi_lq}). Note that only the vertices of $H$ in $TC(Q)$ contribute to the multiplicity of $Q$ in the intrinsic sense. Also, this set of vertices coincides with $\I(Q)$ (Def.~\ref{def_I}).
\end{rem}

\begin{prop}
Let $P$ be of multiplicity $m$ in the intrinsic sense. If $P$ is a
vertex of $H$, then $d(P)$ is $m$-thick (Def.~\ref{mthick}). If $P$ is
not a vertex of $H$, then the edge of $H$ containing $P$ is of weight
at least $m$. 
\end{prop}
\begin{proof}
For each $u\in\dirr$, we can find  a generalized tropical line $L$ such that $P$ is the
vertex of $L$, and $L$ has an edge in the direction
$u$. Like in Proposition \ref{prop_intersection}, a direct calculation of $L\cdot_PH$ finishes the proof.
\end{proof}

Now consider Example \ref{example3}. The edge with $P$ has weight 3,
therefore the stable intersection with each non-horizontal line is at
least 3. The stable intersection of $H$ with the horizontal line through $P$ is exactly
the width of the Newton polygon of the curve in the direction $(1,0)$. 

Consider an edge of $H$ through $P$. Without loss of generality
we can suppose that this edge is horizontal. Let $A_1$ (resp. $A_2$) be the
leftmost (resp. rightmost)
vertex of $H$ on the horizontal long edge $E_P((1,0))$ (Def.~\ref{defi_lq}).

\begin{prop} [c.f. Lemma \ref{boundary}]
\label{prop_width}
If $P$ is of multiplicity $m$ in the intrinsic sense and $E_P((1,0))=A_1A_2$, then the difference between $x$-coordinates of the leftmost vertex of
$d(A_1)$ and rightmost vertex of $d(A_2)$ is at least $m$.
\end{prop}
\begin{proof} Let $L$ be the usual line containing $E$. A direct calculation of $L\cdot_PH$ concludes the proof.
\end{proof}

\begin{prop} Suppose that $P\in H$ is not a vertex of $H$. Let $P$ belongs to an edge $E$ of $H$ with endpoints $A_1$ and $A_2$.
Let  $P$  be of
multiplicity $m$ for $H$ in the intrinsic sense. Suppose that $E_P((1,0))=E$. Then
$\area(d(A_1))+\area(d(A_2))\geq \frac{1}{2}m^2$.
\end{prop}
\begin{proof}
The lattice length of $d(E)$ is at least $m$ and
the sum of the heights of $d(A_1)$ and $d(A_2)$ is at least $m$ by Proposition \ref{prop_width}. Therefore $$\area(d(A_1))+\area(d(A_2))\geq m\cdot m/2.$$
\end{proof}

\section{Two combinatorial lemmata}

\begin{defi}
\label{def_def}
The {\it defect} of $B\subset \ZZ^2$ in a direction $u\in \dirr$ is $\deff_u(B)=\max(m-\omega_u(B),0)$.
\end{defi}

This section is devoted to the proofs of the following statements.

\begin{lemma} 
\label{area4}
For an $m$-thick (Def.~\ref{mthick}) lattice polygon $B$  we have
$$\area(\conv(B))+\frac{1}{2}\sum_{u\in\dirr}\deff_u(B)^2\geq
\frac{3}{8}m^2.$$
\end{lemma}

\begin{lemma}
\label{area3}
For an $m$-thick lattice polygon $B$ we have
$$2\cdot\area(\conv(B))+\frac{1}{2}\sum_{u\in\dirr}\deff_u(B)^2\geq
\frac{1}{2}m^2.$$
\end{lemma}

Unfortunately, though the proofs use only standard combinatorial
arguments, they are cumbersome and rather tedious. Thus the reader is
recommended to skip this section while reading this paper the first time.

\subsection{Using the direction $(0,1)$ or the direction $(1,1)$} 
Suppose that $B$ is not $(m+1)$-thick and the minimal
lattice width $a\leq m$
of $B$ is attained in the horizontal direction. Using the $m$-thickness property, we can find two points $M,L$ on the
left vertical side of $B$ and two  points
$N,K$ on the right vertical side in such a way (Figure \ref{est1}(A)) that the distances $ML$ and $NK$
are equal to $m-a$, so $MNKL$ is a parallelogram. Let us call it the {\it
  initial} parallelogram. Note that in the case $a=m$ we have a
degenerate initial parallelogram with $M=L,N=K$.

\begin{defi}
Denote by $x(A)$ (resp.~$y(A))$ the $x$-coordinate  (resp.~$y$-coordinate) of a point $A\in \ZZ^2$.
\end{defi}

Let $b=y(M)-y(N)$. Applying a suitable coordinate change in $SL(2,\ZZ)$ we may assume that $0\leq b<a$; see Figure \ref{est1}(A).

\begin{prop}\label{prop_paral}
The width $\omega_{(0,1)}(MNKL)$ of the initial parallelogram $MNKL$ in the
direction $(0,1)$ is equal to
$m-a+b$. The width $\omega_{(1,1)}(MNKL)$ is equal to $m-b$.\qed
\end{prop}

\begin{figure} [htbp]
\begin{center}
\begin{tikzpicture}[scale=0.4]
\draw(2,2)--(2,7)--(6,5)--(6,0)--cycle;
\draw(2,2) node[below]{$L$};
\draw(2,7) node[above]{$M$};
\draw(6,5) node[above]{$N$};
\draw(6,0) node[below]{$K$};
\draw[<->](2,3.5)--(6,3.5);
\draw(4,4) node{$a$};
\draw(2,5.5) node{$m-a$};
\draw (5.5,2.5) node{$m-a$};
\draw[<->](7.5,7)--(7.5,5);
\draw(8,6)node{$b$};
\end{tikzpicture}
\qquad
\begin{tikzpicture}[scale=0.3]
\draw(3,4)--(3,9)--(4,10)--(6,10)--(9,7)--(9,2)--cycle;
\draw(3,4)--(7,1)--(9,1)--(9,2);
\draw(3,9)--(9,7);
\draw[<->](3,5.5)--(9,5.5);
\draw (6,6) node{$a$};
\draw(4,10)node[above left]{$M_1$};
\draw(6,10)node[above right]{$M_2$};
\draw(4,10)node[above left]{$M_1$};
\draw(9,1)node[below right]{$K_1$};
\draw(7,1)node[below left]{$K_2$};
\draw[<->](4,9.7)--(6,9.7);
\draw[<->](7,1.3)--(9,1.3);
\draw (5,9.3) node{$x$};
\draw (8,1.7) node{$x$};
\draw(1,9.5)node{$x_1$};
\draw[<->](1.8,10)--(1.8,9);
\draw(11,1.5)node{$x_2$};
\draw[<->](10.2,1)--(10.2,2);
\end{tikzpicture}
\qquad
\caption {The initial
parallelogram $MNKL$ is depicted on the left. The set $B$ is $m$-thick. Therefore,
by taking into consideration
$\omega_{(0,1)}(B)$, we find a polygon $MM_1M_2NKK_1K_2L$, which is a subset of $B$.}
\label{est1}
\end{center}
\end{figure}
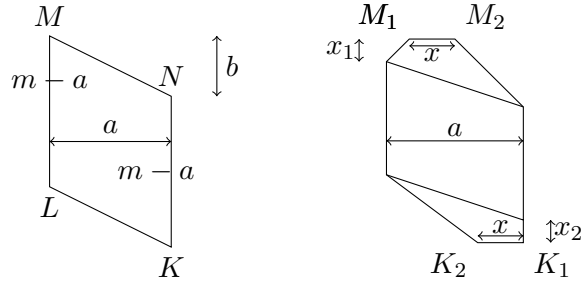

Suppose that $\omega_{(0,1)}(B)=m-x$. Thus, by Proposition \ref{prop_paral}, $x\leq a-b$,
and $B$ must have two horizontal sides $M_1M_2,K_1K_2$, whose lengths are at
least $x$.  Note that it is possible that $x=0$; in that case
we can choose $M_1=M_2\in B,K_1=K_2\in B, y(M_1)-y(K_1)=m$. So, $B$
contains a polygon $MM_1M_2NKK_1K_2L$. A particular example of such a polygon is shown in Figure \ref{est1}, right side. Let
$x_1=x(M_1)-x(M),x_2=x(K)-x(K_1)$. The inequality $x_1+x_2\geq m- (m-a+b+x)$ holds because $B$ is $m$-thick. All the notation is presented in Figure \ref{est1} and this picture serves as the main illustration tool for the following computations.

Note that 
\begin{equation}
\label{eq_s01}
\area(MM_1M_2NKK_1K_2L\setminus MNKL) \geq
a(x_1+x_2)/2+x(b+x_1+b+x_2)/2,
\end{equation} 
and the minimum is attained if the bottom
horizontal edge is in the extremal right position (like at the bottom
in Figure \ref{est1}(B)), and the top edge is in the extremal left
position. Look at the top of Figure \ref{est1}(B)): we minimize
the area of $MM_1M_2NKK_1K_2L$, preserving $MNKL$ and $x_1,x_2$. For that, we should move the
interval $M_1M_2$ to the left as much as possible, while preserving the
convexity of $MM_1M_2NKK_1K_2L$.

\begin{defi}
Define $S_{(0,1)}=\frac{1}{2}\deff_{(0,1)}(B)^2 + 2\cdot\area(B\setminus(MNKL))$.
\end{defi}

Using \eqref{eq_s01}, we see that  
\begin{equation}
S_{(0,1)}\geq x^2/2+ a(x_1+x_2)+x(b+x_1+b+x_2)
\geq a(a-b)+xb-x^2/2.
\end{equation}

\begin{rem}
\label{rem_quadr}
If $c_2<0$, then a function $f(x)=c_2x^2+c_1x+c_0$ defined on an interval
$[c_3,c_4]$ always attains its minimum at $c_3$ or $c_4$.
\end{rem}

We will extensively use this fact below. In particular, $x\in [0,a-b]$ and we have $$S_{(0,1)} \geq\min(a(a-b),
a(a-b) +(a-b)(b-\frac{a-b}{2}).$$ Moreover, if $b\geq a/3$, then $S_{(0,1)} \geq a(a-b)$.  If
$b\leq a/3$, then $$S_{(0,1)}\geq a(a-b)+(3b-a)(a-b)/2.$$

\begin{lemma}\label{lemma_first}
If $b\leq a/3$, then $S_{(0,1)}\geq a^2/2$.
\end{lemma}
\begin{proof}
In this case $S_{(0,1)}\geq a(a-b)+(3b-a)(a-b)/2$.
It follows from Remark \ref{rem_quadr} that it is enough to consider the cases $b=0$ and $b=a/3$. 
\end{proof}

We repeat the above procedure for the direction $(1,1)$. We define $y
= m-\omega_{(1,1)}(B)$. Then, let
$N_1N_2,L_1L_2$ be the vertices of  two sides  of
$B$, perpendicular to the direction $(1,1)$ and $y_1,y_2$ be the
increments of $\omega_{(1,1)}$ obtained by adding $N_1,N_2,L_1,L_2$ to
$MNKL$. Then, $y_1+y_2\geq b-y$ because $B$ is $m$-thick. On Figure \ref{est3} we have $y_1=0,y=1$; note that $y_2=2$
because  $\omega_{(1,1)}(\{(0,0),(1,1)\})=2$.

\begin{defi}
We denote $S_{(1,1)}=\frac{1}{2}\deff_{(1,1)}(B)^2 +
2\cdot\area(B \setminus(MNKL)).$
\end{defi} 

By direct calculation of the areas of the triangles $L_1L_2K, LL_1K, MN_1N_2,MN_2N$, we obtain  \begin{equation}
\label{eq_S11}
S_{(1,1)}\geq y^2/2+a(y_1+y_2)+ y(a-b+y_1+a-b+y_2) \geq
-y^2/2+ab+y(a-b).
\end{equation}

\begin{prop} The following inequalities hold:
1) if $b\leq 2a/3$, then $S_{(1,1)}\geq ab$,

2) if $b\geq 2a/3$, then $S_{(1,1)}\geq ab+b(2a-3b)/2$. 
\end{prop}
\begin{proof} It follows from \eqref{eq_S11}, Remark \ref{rem_quadr}, and the fact that $0\leq y\leq b$.
\end{proof}

\begin{lemma}\label{lemma_second}
If $b\geq 2a/3$, then $S_{(1,1)}\geq a^2/2.$ 
\end{lemma}
\begin{proof}
Again, if $b=a$, then we obtain $S_{(1,1)}\geq a^2/2$; for $b=2a/3$, we get $S_{(1,1)}\geq 2a^2/3$.
\end{proof}

\begin{lemma}
\label{area2}
The following inequality holds: $$2\cdot\area(B\setminus(MNKL))
 +\frac{1}{2}\sum_{\substack{u\in\dirr,\\ u\ne (1,0)}}  \deff_u (B)^2 \geq \frac{a^2}{2}.$$
\end{lemma}

\begin{proof} Indeed, if $a/3\leq b\leq 2a/3$, then $S_{(0,1)}+S_{(1,1)}\geq a^2$ and we are done. Two other cases are covered by Lemmata \ref{lemma_first}, \ref{lemma_second}.
\end{proof}

\begin{proof}[Proof of Lemma \ref{area3}] It follows from the previous
  lemma that 
\begin{align*}
2\cdot\area(\conv(B))&+\frac{1}{2}\sum_{u\in\dirr}\deff_u(B)^2\geq 2\cdot\area(MNKL)+\frac{1}{2}a^2+\frac{1}{2}\deff_{(1,0)}(B)^2 \\ &\geq 2(a(m-a))+\frac{1}{2}a^2+\frac{1}{2}(m-a)^2 \geq \frac{1}{2}m^2 + a(m-a),\end{align*}
and $a(m-a)\geq 0$.\end{proof}

\subsection{Using both directions $(0,1)$ and $(1,1)$}
Now we will use the widths of $B$ in the directions
$(0,1),(1,1),(1,0)$ at the same time. Consider the directions
$(0,1),(1,1)$, and define $x,y,x_1,y_1,x_2,y_2$ as in the previous subsection. Now, $B$ contains
the polygon $s(B)=MM_1M_2NN_1N_2KK_1K_2LL_1L_2$. Some of its vertices are allowed to
coincide. Refer to Figure \ref{est3}.  We assume that the
polygon $s(B)$ satisfies the condition of $m$-thickness in the
directions $(0,1),(1,0),(1,1)$. Our goal is to find an estimate for
the area of $s(B)$ in terms of $m,a,x,y$. We can suppose that $s(B)$, with the above requirements,
is the minimal polygon by area.

\begin{figure} [htbp]
\begin{center}
\begin{tikzpicture}[scale=0.6]
\draw(2,5)--(2,9)--(9,6)--(9,2)--cycle;
\draw[<->](2,6.5)--(9,6.5);
\draw(5,6) node{$a$};
\draw(2,9)--(2,10)--(4,10)--(8,7)--(9,6);
\draw(2,5)--(3,2)--(4,1)--(6,1)--(9,2);

\draw[<->](9.6,6)--(9.6,9);
\draw (10,7.5) node{$b$}; 
\draw[<->](9.5,1)--(9.5,2);
\draw(10,1.5) node{$x_2$};

\draw[<->](0.5,4.5)--(1.5,5.5);
\draw[<->](1.5,9)--(1.5,10);
\draw(1,9.5) node{$x_1$};
\draw(2,10.5) node{$M_1$};
\draw(4,10.5) node{$M_2$};

\draw(0.5,5.5) node{$y_2$};
\draw(3,2) node[above right]{$L_2$};
\draw(4,1) node[above right]{$L_1$};
\draw(6,1) node[below right]{$K_1$};

\draw[<->](2.5,1.5)--(3.5,0.5);
\draw(2.5,0.5) node{$y$};
\draw[<->](8.5,7.5)--(9.5,6.5);
\draw(9.3,7.5) node{$y$};

\draw[<->](4,0.7)--(6,0.7);
\draw(5,0.2) node{$x$};
\draw[<->](2,9.7)--(4,9.7);
\draw(3,9.3) node{$x$};

\draw(8,7) node[below left]{$N_1$};
\draw(9,6) node[below left]{$N_2=N$};
\draw(3,2) node{$\bullet$};
\draw(4,1) node{$\bullet$};
\draw(8,7) node{$\bullet$};
\draw(9,6) node{$\bullet$};
\draw[ultra thick](2,10)--(4,10);
\draw[ultra thick](8,7)--(9,6);
\draw[ultra thick](3,2)--(4,1)--(6,1);
\draw [densely dotted](0,0) grid (10,11);
\end{tikzpicture}
\qquad
\caption {In this example $m=11,a=7$, and $y_1=0$ (therefore $N=N_2$). The vertices $MNKL$ are as in Figure \ref{est1}(A), the
vertices $M_1M_2,K_1,K_2$ are as in Figure \ref{est1}(B), and $L_1$ an $K_2$ coincide. We are looking for the minimum of the sum of the area of this
polygon and $\frac{1}{2}(x^2+y^2)$. }
\label{est3}
\end{center}
\end{figure}
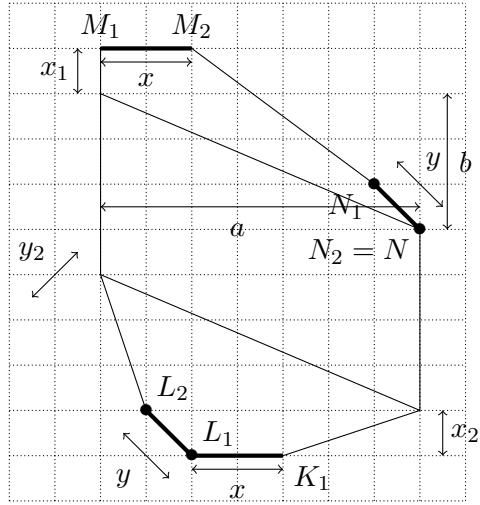
\begin{lemma}
The pairs of intervals  $M_1M_2,N_1N_2$ and $K_1K_2,L_1L_2$ either
share a common vertex (like $K_1K_2$ and $L_1L_2$ in the bottom of Figure
\ref{est3}), or are
maximally far from each other (like $M_1M_2$ and $N_1N_2$ at the top of the
picture).
\end{lemma}

\begin{proof} This lemma follows from the fact that the area
  changes linearly when we move the sides $K_1K_2,L_1L_2,M_1,M_2,N_1,N_2$, preserving  the distances
  $x,y, x_1,y_1,x_2,y_2$. \end{proof}

Let $A_1$ denote the minimal area of the top augmented
piece ($MM_1M_2N_1N_2N$) when $N_1N_2$ and $M_1M_2$ are maximally far from
each other (Figure \ref{est3}, top). Let $A_2$ denote the minimal area of the bottom augmented
piece ($LKK_1K_2L_1L_2$) when $L_1L_2$ and $K_1K_2$ are maximally far from
each other. Let $A_3$ denote the minimal area of the top augmented
piece when $N_1=M_2$. Let $A_4$ denote the minimal area of the bottom augmented
piece when $L_1=K_2$ (Figure \ref{est3}, bottom).  

\begin{lemma}
For $A_1,A_2,A_3,A_4$ defined above, we have $A_1-A_3=A_2-A_4$.
\end{lemma}

\begin{proof} Computing $\omega_{0,1}(B),\omega_{1,1}(B)$, we get
relations $x_1+x_2=a-b-x, y_1+y_2=b-y$.
Now, by direct computations we obtain 
$$A_1 
=\frac{1}{2}(ax_1-yx_1-yb+yy_1+ay_1+ay+xx_1+xb-xy_1-xy).$$

Replacing $x_1$ with $x_2$ and
$y_1$ with $y_2$ and using the above relations we obtain $A_2$: 
$$A_2 =\frac{1}{2}(a^2-ax_1+yx_1+by-yy_1-y^2-ay_1-ay-xx_1-x^2-xb+xy_1+xy).$$

For $A_3,A_4$ we get
$$A_3=\frac{1}{2}(yy_1+xx_1+ax_1+ab-b^2-bx_1+by_1),$$

$$A_4=\frac{1}{2}(-yy_1-y^2-xx_1-x^2+a^2-ab-ax_1+bx_1-by_1+b^2).$$

It is straightforward to see that $A_1-A_3=A_4-A_2$.\end{proof}

If $A_1<A_3$, then $A_4<A_2$. Therefore, the minimal total sum of the areas of the augmented
pieces is  $A_1+A_4$ or $A_2+A_3$. Suppose that the minimum is attained in the case $A_1+A_4$. 

\begin{lemma}
\label{area1}
$\area\big(s(B)\setminus
MNKL\big)+\frac{1}{2}(\deff_{(0,1)}(B)^2+\deff_{(1,1)}(B)^2)\geq \frac{3}{8}a^2$.
\end{lemma}
\begin{proof} The area of $s(B)\setminus
MNKL$ is at least $A_1+A_4$, $\deff_{(0,1)}(B)=x,\deff_{(1,1)}(B)=y$, and  $$A_1+A_4+\frac{1}{2}x^2+\frac{1}{2}y^2=\frac{1}{2}(a^2-ab+b^2+xb+y(a-b-x)+x_1(b-y)+y_1(a-b-x)).$$
Minimizing, we get $x_1=y_1=0$. Next, $y=0,x=0$. Finally, minimizing
$\frac{1}{2}(a^2-ab+b^2)$ with respect to $b$, we
obtain $\frac{3}{8}a^2$.

\end{proof}

\begin{proof}[Proof of Lemma \ref{area4}]
Using the previous Lemma, we get 
$$\area(\conv(B))+\frac{1}{2}\sum_{u\in\dirr}\deff_u(B)^2\geq 
\frac{1}{2}(m-a)^2+a(m-a)+\frac{3}{8}a^2\geq
\frac{3}{8}m^2,$$
 and equality is attained if $a=m$.
\end{proof}

\begin{cor}
\label{minimalw}
As a side effect, for the special case $a=m$ Lemma \ref{area4} gives
\end{cor}

\begin{th_old}[\cite{thinnest}, based on \cite{area}, p. 716, formula $II_3$, p. 715
formula $I$] Let $B\subset \ZZ^2$ be a finite set. Then  $\area(\conv(B))\geq\frac{3}{8}\omega(B)^2$.
\end{th_old}

In fact, from the above proofs it is easy to extract the extremal cases and exact bounds: if $\omega(B)=2k$, then $\area(\conv(B))\geq
\frac{3}{2}k^2$, and if $\omega(B)=2k+1$, then $\area(\conv(B))\geq
\frac{1}{2}(3k^2+3k+1)$.

\begin{rem}
The best constant $c_n$ in the inequality $\mathrm{volume}(\conv(B))\geq
c_n\omega(B)^n$ for $B\subset \ZZ^n$ is not known for $n>2$. The above theorem
says that $c_2=\frac{3}{8}$.
\end{rem}

\section{The proofs of the Exertion Theorems}
Firstly, we introduce the notation which we use throughout the remainder of this paper. Then we prove Lemma~\ref{lemma_implication} and the
Exertion Theorems.

\subsection{Notation}
\label{sec_notation}
Let $H$ be a tropical curve, given by \eqref{eq_tropical}. The extended Newton
polyhedron of $H$ is
$\widetilde\A$. We suppose that the point $P\in H$ is not a vertex of $H$.  We assume that  $P=(0,0)$ and the
edge $E$ containing $P$ is horizontal. We consider the long edge
$\mathfrak{E}=E_P((1,0))$. 

Call the vertices on $\mathfrak{E}$ from left to right
$A_1,A_2,A_3,\dots,A_n$. Clearly, we have $\I_P((1,0))=\bigcup_{i=1}^n \{A_i\}$ (Def.~\ref{def_I}). We denote by $E_i$ the edge of $H$
such that $E_i\subset \mathfrak{E}$ and the left end of $E_i$, if it exists, is the
point $A_i$. If $\mathfrak{E}$ contains an infinite edge of $H$ without a left
end, as in Example \ref{secondexample}, we call it $E_0$.  Let
$P$ belong to $E_\ell$. Refer to Figure \ref{extNewton} for this notation.

If $A_1$ is the left end of
$\mathfrak{E}$, then for the consistency of notation we add a ``fictive'' edge $E_0$ which has length
zero; $d(E_0)$ will denote the leftmost vertex of the face
$d(A_1)$. We say that $d(E_0)$ is a vertical edge of zero length.  Similarly, if $A_n$ is the right end of
$\mathfrak{E}$, then we add a ``fictive'' edge $E_{n}$ which has length
zero;
$d(E_{n})$ will denote the rightmost vertex of the face $d(A_n)$. Now,
regardless of finiteness of $\mathfrak{E}$, we always have edges
$E_0,E_1,\dots,E_{n}$. Since $\E$ is horizontal, it follows from
Proposition \ref{trop_prop} that for each $i=0,\dots,n$ the edge
$d(E_i)$ is vertical.

\begin{defi}
Refer to Figure~\ref{graph}(A). Let $x_i$ be the $x$-coordinate of the edge $d(E_i)$. By $y_i \leq y^i$ we denote the $y$-coordinates
of the endpoints of $d(E_i)$, and by $m_i=y^i-y_i$ the lattice length
of $d(E_i)$.  
\end{defi}

Note that we have $y_i=y^i$ if and only if $i=0$ (resp. $i=n$) and the long edge
$\E$ is finite on the left (resp. right) side.

\begin{prop} For each $i=1,\dots,n$, we have 
\begin{equation}
\label{eq_area}
\area(d(A_{i}))\geq \frac{1}{2}(x_{i}-x_{i-1})(m_{i}+m_{i-1}).
\end{equation}
\end{prop}
\begin{proof}
Since $d(A_{i})$ has two vertical sides of lengths $m_i,m_{i-1}$, the
inequality follows from the convexity of $d(A_{i})$.
\end{proof}

\begin{defi}
Recall that for each edge $E'$ of $H$, there is the dual edge $d(E')$ in the
subdivision of $\Delta$. Also, all the edges in the subdivision of
$\Delta$ arise as the projections of the edges of
$\widetilde\A$. We denote by $L(d(E'))$ the lifting of an edge $d(E')$ in the
boundary of $\widetilde{\A}$.
\end{defi}

If $d(E_0)$ is a point, then we denote by $L(d(E_0))$ the corresponding vertex of $\widetilde{\A}$. We apply the same rule for $d(E_n)$: look at the point $d(E_4)$ in Figure~\ref{extNewton}.

\begin{prop}
\label{horison2}
For each $i=1,\dots,n$, the face of $\widetilde{\A}$ spanned by
$L(d(E_{i-1}))$ and $L(d(E_i))$ projects to the face $d(A_i)$.
\end{prop}
\begin{proof}
This follows from Proposition \ref{trop_prop}. Refer to Figure \ref{extNewton}.
\end{proof}

\begin{figure} [htbp]
\begin{center}

\begin{tikzpicture}
[y= {(0.2cm,1cm)}, z={(0cm,0.5cm)}, x={(3cm,0cm)},scale=0.3]

\draw (-2,4,4)--(-0.5,5,2)--(0,4,8);
\draw (1,4,3)--(-0.5,5,2);

\filldraw[black,fill=gray!20](-2,2,4)--(0,-0.5,4)--(0.444,-0.333,4)--(0.8,1.8,4)-- (0.8,4,4)--(-0.333,4.666,4)--(-2,4,4)--cycle;

\draw[very thick] (-2.5,1,2)--(-2.5,2,2);
\draw[very thick] (-2,2,4)--(-2,4,4);
\draw[very thick] (0,1,8)--(0,4,8);
\draw[very thick] (1,2,3)--(1,4,3);
\draw[very thick] (2,2,-6)--(2,2,-6);

\draw (-3.3,-0.1,6) node{$L(d(E_{0}))$};
\draw (2.7,3,-6) node{$L(d(E_{4}))$};

\newcommand{\rr}{-14}

\filldraw[black,fill=gray!20](-2,2,\rr)--(0,-0.5,\rr)--(0.444,-0.333,\rr)--(0.8,1.8,\rr)-- (0.8,4,\rr)--(-0.333,4.666,\rr)--(-2,4,\rr)--cycle;
\draw[very thick] (-2.5,1,\rr)--(-2.5,2,\rr);
\draw[very thick] (-2,2,\rr)--(-2,4,\rr);
\draw[very thick] (0,1,\rr)--(0,4,\rr);
\draw[very thick] (1,2,\rr)--(1,4,\rr);
\draw[very thick] (2,2,\rr)--(2,2,\rr);
\draw (-3.25,-1,\rr)--(2.8,-1,\rr)--(2.8,5,\rr)--(-3.25,5,\rr)--cycle;
\draw (-2.5,1,\rr)--(-2,2,\rr)--(0,1,\rr)--(1,2,\rr)-- (2,2,\rr);
\draw (-2.5,2,\rr)--(-2,4,\rr)--(0,4,\rr)--(1,4,\rr)--(2,2,\rr);
\draw (2,2,\rr) node {$\bullet$};
\draw (-2.5,0,\rr) node{$d(E_0)$};
\draw (-1.75,1,\rr) node{$d(E_1)$};
\draw (0,0,\rr) node{$d(E_2)$};
\draw (1,1,\rr) node{$d(E_3)$};
\draw (2.3,1,\rr) node{$d(E_4)$};

\draw (2,2,-6)--(1,2,3)--(0,1,8)--(-2,2,4)--(-2.5,1,2);
\draw(2,2,-6)--(1,4,3)--(0,4,8)--(-2,4,4)--(-2.5,2,2);
\draw (0,-2,0)--(0,1,8);
\draw (0,-2,0)--(-2,2,4);
\draw (0,-2,0)--(-2.5,1,2);
\draw (1,-2,-1)--(0,-2,0);
\draw (1,-2,-1)--(1,2,3);
\draw (1,-2,-1)--(2,2,-6);
\draw (1,-2,-1)--(0,1,8);
\draw (2,7,-6)--(1,7,3)--(0,7,8)--(-2,7,4)--(-2.5,7,2);
\draw[dashed] (-2,7,4)--(0.8,7,4);
\draw(2,7,3) node{$z=g(x)$};
\draw (2.3,2.5,3) node{$L(d(E_{3}))$};
\draw (0,0,7) node{$L(d(E_2))$};
\draw (-2,4,6) node{$L(d(E_{1}))$};
\end{tikzpicture}
\begin{tikzpicture}
\draw (0,0) node{$\bullet$};
\draw (1.5,0) node{$\bullet$};
\draw (0.5,0) node{$\bullet$};
\draw (-1,0) node{$\bullet$};
\draw (-2,0) node{$\bullet$};
\draw (0,0) node[below]{$P$};
\draw (1.5,0) node[above]{$A_4$};
\draw (0.5,0) node[above right]{$A_3$};
\draw (-1,0) node[above left]{$A_2$};
\draw (-2,0) node[above left]{$A_1$};
\draw (-2.5,0) node[below]{$E_0$};
\draw (-1.5,0) node[below]{$E_1$};
\draw (-0.5,0) node[below]{$E_2$};
\draw (1,0) node[below]{$E_3$};

\draw [thick] (1.5,-0.5)--(1.5,0)--(2.5,0.5);
\draw [thick] (1.25,-0.75)--(0.5,0)--(0.5,1);
\draw [thick] (-1.25,-0.5)--(-1,0)--(-1,1);
\draw [thick] (-1.75,-0.5)--(-2,0)--(-2.25,1);
\draw [thick] (-3,0)--(1.5,0);
\end{tikzpicture}

\caption {On the left we see a part of the extended Newton polyhedron, which corresponds
  to a horizontal long
  edge on the right. The long edge $E_P((1,0))$ consists of the edges
  $E_0,E_1,E_2,E_3$, $l=2$, and $\I(P)=\{A_1,A_2,A_3,A_4\}$. The edges $L(d(E_i))$ of $\widetilde \A$ are
  depicted as thick black horizontal intervals, while a section of the extended
  Newton polyhedron by a horizontal plane is marked in gray, as well as its projection onto the $xy$-plane. The
  projection of $\widetilde\A$ onto the $xz$-plane is also depicted; the projection of
  the section is dashed. Note that we added a fictive edge $E_4$, and $d(E_4)$ is the rightmost vertex of $d(A_4)$. }
\label{extNewton}
\end{center}
\end{figure}
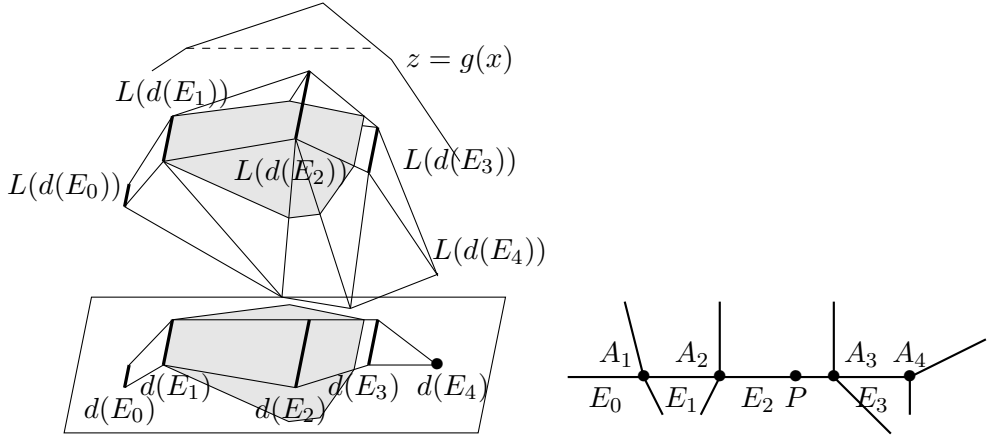

The edge $E_\ell$ is horizontal and passes through $(0,0)$. That
implies the following lemma. Nevertheless, we give more details to
illustrate the notation. 

\begin{lemma} The direction of the edge $L(d(E_\ell))$ is $(0,1,0)$ and $L(d(E_\ell))$ is
 higher than all other points of $\widetilde{\A}$. 
\end{lemma}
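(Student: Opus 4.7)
The plan is to deduce both claims directly from the tropical dictionary (Remark~\ref{trop_prop}, Proposition~\ref{newdefidep}, Proposition~\ref{horizon}) applied to the normalization $P=(0,0)\in E_l$ with $E_l$ horizontal.

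First I would pin down the direction of $L(d(E_l))$. Because $E_l$ is horizontal, the duality of Remark~\ref{trop_prop} forces $d(E_l)$ to be vertical in the $xy$-plane, so that its two endpoints have the form $(x_l,y_l)$ and $(x_l,y^l)$. Hence the projection of $L(d(E_l))$ onto the $xy$-plane already has direction $(0,1,0)$; the remaining task is to show that the $z$-coordinate is constant along $L(d(E_l))$. This is exactly Proposition~\ref{newdefidep} applied at $Q=(0,0)$ to the edge $E_l$: it yields $\val(a_{x_l,y_l})=\val(a_{x_l,y^l})$, which is precisely the statement that the two lifted endpoints of $L(d(E_l))$ sit at the same height. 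Equivalently, this is a special case of Proposition~\ref{horizon}, which says that for $Q=(0,0)$ the edges in $\Dep(P)$ lift to horizontal edges of $\widetilde{\A}$.

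Next I would establish that $L(d(E_l))$ dominates every other point of $\widetilde{\A}$ in $z$. The height along $L(d(E_l))$ equals the common value $\val(a_{x_l,y_l})=\val(a_{x_l,y^l})$, and by the very definition $\Val(F)(0,0)=\max_{(i,j)\in\A}\val(a_{ij})$. So I need to argue that this maximum is achieved only on the lifts of points of $\A$ lying in $d(E_l)$. This is where I use the hypothesis that $P=(0,0)$ lies in the relative interior of $E_l$ and is \emph{not} a vertex of $\Val(C)$: by the description of $\Val(C)$ as the corner locus of $\Val(F)$, the maximum at $P$ is attained at exactly those monomials whose lattice points belong to $d(E_l)$. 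For any $(i,j)\in\A$ outside $d(E_l)$ we therefore get the strict inequality $\val(a_{ij})<\val(a_{x_l,y_l})$, and since $\widetilde{\A}$ is the convex hull of $\{(i,j,s):s\le\val(a_{ij})\}$, every point of $\widetilde{\A}$ not on $L(d(E_l))$ has strictly smaller $z$-coordinate.

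The two steps together finish the lemma. I do not expect a serious obstacle here: the whole content is that tropical edges containing $P$ lift to edges of $\widetilde{\A}$ that are horizontal in $z$ (Proposition~\ref{horizon}), and that an edge point of $\Val(C)$ realizes the global maximum of $\Val(F)$ only along the dual edge. The only mild care needed is to distinguish the case ``$P$ on an edge but not a vertex'' so that the upper face of $\widetilde{\A}$ attaining the max height is exactly the one-dimensional face $L(d(E_l))$, not a two-dimensional face that would appear if $P$ were a vertex of $\Val(C)$.
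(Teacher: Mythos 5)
Your proposal is correct and follows essentially the same route as the paper: duality makes $d(E_l)$ vertical, equality of the two endpoint monomials of $\Val(F)$ at $P=(0,0)$ gives $\val(a_{x_ly_l})=\val(a_{x_ly^l})$ so the lift is horizontal, and since $\Val(F)(0,0)=\max_{(i,j)}\val(a_{ij})$ is attained only at lattice points of $d(E_l)$ (else $P$ would be a vertex), all other points of $\widetilde{\A}$ lie strictly lower. No gaps.
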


\begin{proof} Refer to Figure \ref{example3}. The top end $(x_l,y^l)\in\A$
of $d(E_l)$ represents the tropical monomial $M_1=\val(a_{x_ly^l})+x_lX+y^lY$ of
$\Trop(F)$; $M_1$ dominates other
monomials in the region above the edge $E_l$. The bottom end
$(x_l,y_l)\in\A$ of $d(E_l)$ represents the monomial $M_2=\val(a_{x_ly_l})+x_lX+y_lY$ which dominates other
monomials in the region below the edge $E_l$. Therefore
$M_1$ and $M_2$ are equal on the
edge $E_l$, in particular at the point $(0,0)$; therefore
$\val(a_{x_ly^l})=\val(a_{x_ly_l})$, hence $L(d(E))$ is
horizontal. Furthermore, $\max_{(i,j)\in\A} (\val(a_{ij})+iX+jY) =
\val(a_{x_ly^l})=\val(a_{x_ly_l})$ at the point $(0,0)$. 

If for some $i,j$ we have $\val(a_{ij})=\val(a_{x_ly^l})$, then $i=x_l$, otherwise
$P=(0,0)$ is a vertex of $H$. It follows from the maximality of
$\val(a_{x_ly^l})+x_lX+y^lY$ in the region above $E_l$ that $j\leq y^l$; then $y_l\leq j$
by symmetric reasoning.
\end{proof}

Refer to Figure \ref{extNewton}: the height of each bold edge $d(E_k)$ on the left side of the picture is greater then the heights
$\val(a_{ij})$ of the points $(i,j,\val(a_{ij}))$ such that $(i,j)$ lies to the left of $E_k$. In other
words, the projections of the bolded edges on the $xz$-plane lie on the
boundary of the $xz$-projection of $\widetilde{\A}$.

\begin{lemma}
\label{structure}
Consider an edge $E_q$ with $q<l$. For each $(i,j)\in \A$ with the
property 1) $i<x_q$ or 2) $i=x_q, j<y_q$, or 3) $i=x_q,j>y^q$, the number
$\val(a_{ij})$ is less than $\val(a_{x_qy_q})=\val(a_{x_qy^q})$. The
symmetric statement holds for $q>l$.
\end{lemma}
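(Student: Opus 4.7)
The lemma is a local fact about the values of $\Val(F)$ along the horizontal edge $E_q\subset\mathfrak{E}$; my plan is to evaluate $\Val(F)$ at a point $R=(X,0)$ in the interior of $E_q$ and use its description as a maximum of linear functions. Since $E_q$ lies strictly to the left of $E_l$ (the edge of $\mathfrak{E}$ containing $P=(0,0)$) and every edge of $\mathfrak{E}$ is horizontal, $X<0$. On $\mathrm{int}(E_q)$ the tropical maximum is attained by the two monomials with indices $(x_q,y_q)$ and $(x_q,y^q)$ (the endpoints of $d(E_q)$), so $\Val(F)(R)=\val(a_{x_q,y_q})+x_qX$, and for every $(i,j)\in\A$,
\[
\val(a_{ij})+iX\;\le\;\val(a_{x_q,y_q})+x_qX.
\]

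\textbf{Case 1.} Rearranging gives $\val(a_{ij})\le\val(a_{x_q,y_q})+(x_q-i)X$. If $i<x_q$ then the extra term $(x_q-i)X$ is strictly negative (positive times negative), whence $\val(a_{ij})<\val(a_{x_q,y_q})$. \textbf{Cases 2 and 3.} Here $i=x_q$ and $j\notin[y_q,y^q]$. The lifted edge $L(d(E_q))\subset\partial\widetilde\A$ is horizontal at height $\val(a_{x_q,y_q})$ and is a \emph{maximal} horizontal face of the upper boundary of $\widetilde\A$ spanning exactly $y\in[y_q,y^q]$. Consequently the concave restriction of the upper boundary to the line $\{x=x_q\}$ drops strictly below $\val(a_{x_q,y_q})$ on both sides of $[y_q,y^q]$, so $\val(a_{x_q,j})<\val(a_{x_q,y_q})$ for every $(x_q,j)\in\A$ with $j$ outside that interval.

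\textbf{Symmetry and obstacle.} The case $q>l$ is identical after interchanging left and right: $X>0$ on $\mathrm{int}(E_q)$, so $i>x_q$ makes $(x_q-i)X<0$ again. I do not foresee any real obstacle — the content is a direct reading of the definitions of $\Val(F)$ and of the upper boundary of $\widetilde\A$. The only minor bookkeeping point is that $E_q$ could be an unbounded edge (for example, $q=0$ when $E_0$ is the infinite left ray of $\mathfrak{E}$), but the argument uses only that $\mathrm{int}(E_q)\neq\emptyset$ and lies in the half-plane $\{X<0\}$, so it carries over verbatim.
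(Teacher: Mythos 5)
Your proof is correct and is essentially the paper's argument in dual form: the inequality $\val(a_{ij})+iX\le \Val(F)(R)$ at a point $R$ of $\mathrm{int}(E_q)$ is exactly the statement that all points $(i,j,\val(a_{ij}))$ lie under the supporting plane of $\widetilde{\A}$ along the face dual to $R$, which is what the paper invokes via convexity of $\widetilde{\A}$. Your treatment of cases 2 and 3 (strictness from the maximality of the horizontal edge $L(d(E_q))$, i.e., extremality of its endpoints as vertices of $\widetilde{\A}$) is in fact slightly more careful than the paper's one-line appeal to convexity.
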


\begin{proof} Refer to Figure \ref{extNewton}. Each two consecutive edges
$d(E_i),d(E_{i+1})$ bound the face $d(A_{i+1})$, therefore the edges
$L(d(E_i)),L(d(E_{i+1}))$ (bolded in Figure \ref{extNewton}) also bound a face of the polyhedron $\widetilde{\A}$. The edges $d(E_i)$ are all
parallel to $d(E_l)$, therefore all the edges $L(d(E_i))$ are
parallel to each other as well. 
Provided $\widetilde{\A}$ is a convex polytope, all the points $(i,j,\val(a_{ij}))$ lie
under each plane passing through a face of $\widetilde{\A}$. The part with $q>l$ can be
proven by a word-by-word repetition of the above arguments. 
\end{proof}

\begin{defi}
Define $v_q:= \val(a_{x_qy_q})=\val(a_{x_qy^q})$, the height of the edge $L(d(E_q))$.
\end{defi}

Lemma \ref{structure} implies that $v_0<v_1<\dots<v_\ell>v_{l+1}>\dots>v_n$.

Let us project the boundary of $\widetilde{\A}$ to the
$xz$-plane. Each edge $L(d(E_i))$ is projected to the point
$(x_i,v_i)$ (Figure \ref{extNewton}(A)  and Figure
\ref{graph}(B) show examples of the result of such a projection). 
\begin{defi}
Let $g(x)$ equal $\max \{z|(x,y,z)\in \widetilde{\A}\}$. 
\end{defi}

The $xz$-projection of the face of $\widetilde\A$ stretched on the edges
$L(d(E_i)),L(d(E_{i+1}))$ coincides with the graph of $g$ on the
interval $[x_i,x_{i+1}]$, i.e., with the interval
$(x_i,v_i),(x_{i+1},v_{i+1})$ (compare Figures~\ref{extNewton} and
\ref{graph}).

For $x'\in [x_0,x_n]$ let $\hat{g}(x')$ be the length of the interval
excised from the line $z=g(x')$ by the graph
of $g$ (see Figure \ref{graph}, and the definition before Lemma \ref{function}).

\begin{rem}
If $P$ is a vertex of $H$, then we can repeat all the above steps for each long edge through $P$. 
\end{rem}

\subsection{The proof of Lemma \ref{lemma_implication}}

In Example \ref{secondexample}, $G$ can be written as
$$t^{-3}\cdot x(y-1)^3+t^{-2}\cdot x(x-1)(y-1)^2+t^{-1}\cdot(x-1)^2(y-1)+t^2\cdot(x-1)^3.$$ Therefore, in that example
the extended Newton polyhedron is made of layers of $m$-thick sets, namely
$\mathrm{supp}(x(y-1)^3),\mathrm{supp}(x(x-1)(y-1)^2),\mathrm{supp}((x-1)^2(y-1)),\mathrm{supp}((x-1)^3)$. 

Let $H=\Trop(C)$ and $\mu_{(1,1)}(C)\geq m$. We will prove that the horizontal sections of $\widetilde\A$ passing through the edges $L(d(E_i)), i = 0,\dots,n$ are $m$-thick. Then we extend this result to all the horizontal sections by Proposition~\ref{concavity}.

\begin{prop}
\label{prop_maximal}
If $P$ is not a vertex of $H$, then the edge $d(E_l)$ (see Section \ref{sec_notation} for the notation) has the lattice length at least $m$.
\end{prop}
\begin{proof} Let $\mu'=\max\{\mu\in \RR|\A_\mu\ne\varnothing\}$.
Clearly, $d(E_l) = \conv(\A_{\mu'})$. By the \hyperref[thickness_lemma]{$m$-thickness Lemma}, $d(E_l)$ is $m$-thick, which finishes the proof.
\end{proof}

\begin{rem}
\label{rem_contain}
If $P$ is a vertex of $\Trop(C)$, then the same reasoning shows that $\widetilde\A_{\mu'}=d(P)$
is $m$-thick. Furthermore, $\widetilde\A_\mu$ (Def.~\ref{def_wamu}) always contains $\A_{\mu'}$ for each $\mu<\mu'$.
\end{rem}

\begin{figure} [htbp]

\begin{tikzpicture}[scale=0.7]
\draw (0,0) node[below]{$x_0$};
\draw (-0.1,3.45) node{$m_0=0$};
\draw (-0.5,3) node{$y_0$};
\draw[dashed](-0.2,3)--(0,3);
\draw[dashed](0,0)--(0,3);

\draw (1,0) node[below]{$x_1$};
\draw[thick] (1,1.4)--(1,3.4);
\draw[dashed](1,1.4)--(1,0);
\draw (1,2.4) node{$m_1$};
\draw (2,0) node[below]{$x_2$};
\draw[thick] (2,1)--(2,4);
\draw (-0.5,1) node{$y_2$};
\draw[dashed](-0.3,1)--(2,1);
\draw[dashed](-0.3,4)--(2,4);
\draw (-0.5,4) node{$y^2$};
\draw (2,1) node[below]{$(x_2,y_2)$};
\draw (2,4) node[above]{$(x_2,y^2)$};
\draw (2,3) node{$m_2$};
\draw (4,0) node[below]{$x_3$};
\draw[thick] (4,0.5)--(4,4);
\draw (4,2) node{$m_3$};
\draw (7,0) node[below]{$x_4$};
\draw[thick] (7,1)--(7,3.7);
\draw (7,2) node{$m_4$};
\draw (8,0) node[below]{$x_5$};
\draw[thick] (8,0.8)--(8,2.7);
\draw (8,1.5) node{$m_5$};
\draw (9,0) node[below]{$x_6$};
\draw[thick] (9,2)--(9,2.5);
\draw (8.9,2.2) node[right]{$m_6$};
\draw(0,3)--(1,3.4)--(2,4)--(4,4)--(7,3.7)--(8,2.7)--(9,2.5)--(9,2)--(8,0.8)--(7,1)--(4,0.5)--(2,1)--(1,1.4)--cycle;
\draw(4,-1) node{$(A)$};
\end{tikzpicture}
\qquad
\begin{tikzpicture}[scale=0.7]
\draw (0,0) node[below]{$x_0$};
\draw (1,0) node[below]{$x_1$};
\draw (2,0) node[below]{$x_2$};
\draw (3,0) node[above]{$b$};
\draw (3,0) node {$\bullet$}; 
\draw (3,3.25) node{$\bullet$}; 
\draw (3,3.25) node[above]{$g(b)$};
\draw (4.15,0) node[below]{$x_3$};
\draw (7,0) node[below]{$x_4$};
\draw (8,0) node[below]{$x_5$};
\draw (9,0) node[below]{$x_6$};
\draw (-0.1,0)--(9.1,0);
\draw(0,0)--++(1,2)--++(1,1)--++(2,0.5)--++(3,-0.8)--++(1,-0.8)--++(1,-1.4);
\draw [thin](1,0)--(1,2);
\draw (1,1) node{$m_1$};
\draw (0.9,1.6) node[left]{$g(a)$};
\draw [thin](2,0)--(2,3);
\draw (2,1) node{$m_2$};
\draw (7,1) node{$m_4$};
\draw [thin](7,0)--(7,2.7);
\draw (8,1) node{$m_5$};
\draw [thin](8,0)--(8,1.9);
\draw [thin](9,0)--(9,0.5);
\draw(3,3.25)--++(6,0);
\draw(7,3.5) node {$z=g(b)$};
\draw [<->] (3,3)--(5,3);
\draw(4.6,2.5) node{$\hat{g}(b)$};

\draw (0.8,0) node[above]{$a$};
\draw (0.8,0) node {$\bullet$};
\draw [<->] (0.8,1.6)--(8.2,1.6);
\draw(4.6,1.1) node{$\hat{g}(a)$};
\draw(4,-1) node{$(B)$};
\end{tikzpicture}\qquad
\begin{center}
\caption {Projections of $\widetilde\A$ to the $xy$-plane (A) and to
  the $xz$-plane (B) are depicted. The number $x_i$ is the $x$-coordinate of the edge
  $d(E_i)$ in $(A)$. In this example, the long edge $E_P((1,0))$ is finite from the left side
  (therefore $m_0=0$) and infinite from the right side (therefore $m_n=m_6>0$). By definition $g(x_i)=v_i$ in $(B)$. Also,
  $\hat{g}(a)$ and $\hat{g}(b)$ are presented in $(B)$, and
  $\hat{g}(x_3)=0, l=3$. The key observation is
  that $\hat{g}(x_i)+m_i\geq m$ (Lemma \ref{hat}).
  Furthermore, $\hat{g}$ is concave on $[x_i,x_{i+1}]$ for each $i$; see Proposition
  \ref{concavity} for details.}
\label{graph}
\end{center}
\end{figure}
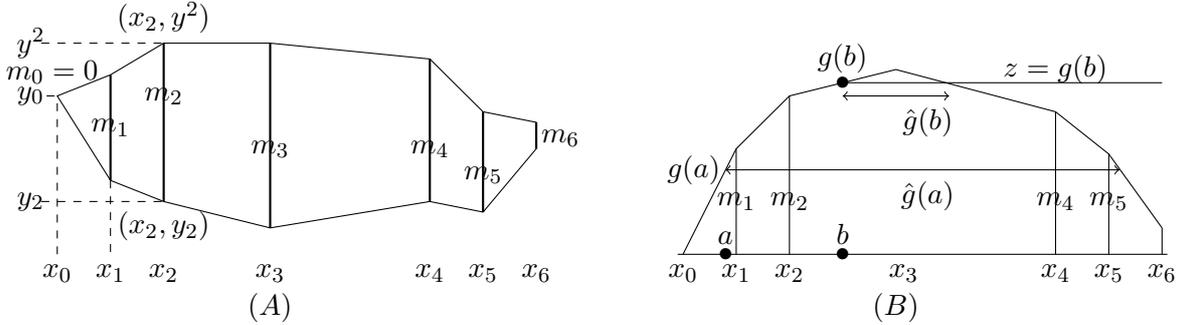

By the \hyperref[thickness_lemma]{$m$-thickness Lemma}, for each $i=0,\dots, n$, the set $\A_{v_i}$ is
$m$-thick. 
The following Lemma estimates the
length of $d(E_i)$ via the width $\hat{g}(x_i)$
of the horizontal section through $L(d(E_i))$.

\begin{lemma}
\label{hat}
For each $i=0,1,\dots,n$, the length $m_i$ of the edge $d(E_i)$ is at least $m-\hat{g}(x_i)$.
\end{lemma}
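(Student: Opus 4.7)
The plan is to deduce the lemma from the M-thickness Theorem applied to $\A_{val_i}$, using $\hat{g}(x_i)$ as a bound on the lattice width of $\A_{val_i}$ in the $(1,0)$-direction.

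First I would establish the inequality $\omega_{(1,0)}(\A_{val_i})\leq \hat{g}(x_i)$. The idea: any $(i',j')\in\A_{val_i}$ has $\val(a_{i'j'})\geq val_i$, hence the point $(i',j',val_i)$ lies in $\widetilde{\A}$ by definition of the extended Newton polyhedron. Projecting $\widetilde{\A}$ to the $xz$-plane yields a two-dimensional region bounded above by the graph of $g$, and the horizontal slice of this region at height $val_i$ is exactly $\{x:g(x)\geq val_i\}$. Since $g(x_i)=val_i$, the length of this slice equals $\hat{g}(x_i)$ by definition, so the $x$-projection of $\A_{val_i}$ is contained in this interval.

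Next I invoke \hyperref[thickness_theorem]{The M-thickness Theorem}: $\A_{val_i}$ is $m$-thick. If $\hat{g}(x_i)\geq m$ the lemma is trivial since $m_i\geq 0$. Otherwise, setting $u=(1,0)$ and $a_u=m-\omega_{(1,0)}(\A_{val_i})\geq m-\hat{g}(x_i)>0$, the definition of $m$-thickness forces $\mathrm{ConvexHull}(\A_{val_i})$ to have two vertical sides, each of integer length at least $a_u$. To finish, I would identify $d(E_i)$ with the leftmost such side: Lemma \ref{structure} ensures that every point of $\A_{val_i}$ has $x$-coordinate at least $x_i$, and that the points of $\A_{val_i}$ sitting at $x=x_i$ are exactly the lattice points on $d(E_i)$ (the ones in $L(d(E_i))$ at height $val_i$). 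Thus $d(E_i)$ is the leftmost vertical side of $\mathrm{ConvexHull}(\A_{val_i})$, and its integer length $m_i$ is at least $a_u\geq m-\hat{g}(x_i)$.

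The main thing to take care with is the identification $d(E_i)=$ (leftmost vertical side of $\mathrm{ConvexHull}(\A_{val_i}))$; everything else is routine, but this step is what allows one to transfer the purely combinatorial $m$-thickness bound back to the specific edge length $m_i$ appearing in the subdivision. This identification leans entirely on Lemma \ref{structure}, which excludes lattice points of $\A_{val_i}$ lying strictly to the left of $x_i$, or at $x_i$ but outside the $y$-range $[y_i,y^i]$.
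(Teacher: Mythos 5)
Your proposal is correct and takes essentially the same route as the paper: bound $\omega_{(1,0)}(\A_{val_i})$ by $\hat{g}(x_i)$ via the $xz$-projection, then apply $m$-thickness of $\A_{val_i}$ (i.e., Lemma \ref{vertical_sides} through \hyperref[thickness_theorem]{The M-thickness Theorem}) together with Lemma \ref{structure} to identify $d(E_i)$ with a vertical side of $\mathrm{ConvexHull}(\A_{val_i})$. The only nitpick is that for $i>l$ the edge $d(E_i)$ is the \emph{rightmost}, not the leftmost, vertical side, which is covered by the symmetric half of Lemma \ref{structure}.
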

\begin{proof} We draw
the horizontal section $\{z=v_i\}$ through the bold edge $L(d(E_i))$; refer to Figure \ref{extNewton} where $i=l-1$. Consider the line $z=g(x_i)$ in the $xz$-plane. Suppose that
  the projection of the interval, excised on this line by the graph of $g$, onto the $x$-axis is
$[x_i,x_i'], x_i'>x_i$. In fact, the length $\hat{g}(x_i)$ of the dashed line in
Figure \ref{extNewton} satisfies $\hat{g}(x_i)=x_i'-x_i=
\omega_{(1,0)}(\widetilde{\A}\cap \{z=v_i\})=\omega_{(1,0)}(\widetilde \A_{v_i})$. The set
$\A_{v_i}$ is inside the strip $\{ (x,y)|\ x_i\leq x\leq x_i'\}$, and $\A_{v_i}$ is $m$-thick by the \hyperref[thickness_lemma]{$m$-thickness Lemma}.  Since $\conv(\A_{v_i})\cap \{x=x_i\}$ is $d(E_i)$,
this lemma follows from the definition of $m$-thickness.\end{proof}

\begin{rem}
In fact, $\A_{v_i}$ is contained in the $xy$-projection of $\{z=v_i\}\cap \widetilde{\A}$, but does not necessarily coincide with it. 
\end{rem}

Consider the following piecewise linear function $f$ on the interval $[x_0,x_n]$:
let $f(x_i)=m_i$ for  $i=0,\dots, n$, then extend $f$ to be linear on each
interval $[x_i,x_{i+1}]$. 

\begin{prop}
\label{prop_linear}The length of the left vertical side of $\conv(\widetilde\A_{g(x)}), x\leq x_l$ is at least $f(x)$.
\end{prop}

\begin{proof}
It follows from the fact that the face of $\widetilde\A$ stretched on $L(d(E_i)),L(d(E_{i+1}))$ contains the trapezoid stretched on $L(d(E_i)),L(d(E_{i+1}))$, and $f$ calculates the lengths of its intersection with horizontal sections. 
\end{proof}

\begin{lemma} 
\label{lemma_mthick}
The inequality $f(x)+\hat{g}(x)\geq m$ holds on the interval $[x_0,x_n]$.
\end{lemma}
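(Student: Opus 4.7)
The plan is to combine Lemma \ref{hat}, which gives the pointwise bound at the nodes $x_i$, with the concavity behavior of $\hat{g}$ on each piece $[x_i,x_{i+1}]$, exploiting that $f$ was defined to be linear on those same pieces.

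First I would record the boundary data. By Lemma \ref{hat}, $f(x_i)+\hat{g}(x_i)=m_i+\hat{g}(x_i)\geq m$ for each $i=1,\ldots,n$. So the inequality is already established at the nodes, and the remaining work is to interpolate it across each open subinterval $(x_i,x_{i+1})$.

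Next, I would observe the structure of $g$ between consecutive nodes. The boundary of the projection of $\widetilde{\A}$ onto the $xz$-plane consists precisely of the projections of the faces of $\widetilde{\A}$ whose edges include the liftings $L(d(E_i))$ and $L(d(E_{i+1}))$; each such projected face is a straight segment joining $(x_i,\mathit{val}_i)$ to $(x_{i+1},\mathit{val}_{i+1})$. Hence $g$ is linear on each interval $[x_i,x_{i+1}]$, and $g$ is concave on $[x_0,x_n]$ by convexity of $\widetilde{\A}$. Applying Proposition \ref{concavity} on the interval $[x_i,x_{i+1}]$ (where $h:=g$ is linear), the function $\hat{g}$ is concave on $[x_i,x_{i+1}]$.

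Finally, $f$ is linear on $[x_i,x_{i+1}]$ by construction, so $f+\hat{g}$ is concave on that interval. A concave function on a closed interval lies above the chord joining its endpoint values, hence above the minimum of those values. Therefore, for every $x\in[x_i,x_{i+1}]$,
\[
f(x)+\hat{g}(x)\;\geq\;\min\bigl(f(x_i)+\hat{g}(x_i),\,f(x_{i+1})+\hat{g}(x_{i+1})\bigr)\;\geq\;m,
\]
where the last inequality uses Lemma \ref{hat} at both endpoints. Taking the union of the intervals covers $[x_1,x_n]$ and gives the claim. The main subtlety is simply matching the pieces on which both $f$ and $g$ are linear, so that Proposition \ref{concavity} applies verbatim; no further difficulty should appear.
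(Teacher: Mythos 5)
Your proof is correct and follows essentially the same route as the paper: Lemma \ref{hat} at the nodes, linearity of $f$ and concavity of $\hat{g}$ on each $[x_i,x_{i+1}]$ via Proposition \ref{concavity}, and then the concavity of the sum to interpolate. You in fact spell out more carefully than the paper why $g$ is linear on each subinterval, which is exactly the hypothesis Proposition \ref{concavity} needs.
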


\begin{proof} For each $i=0,\dots,n$  the inequality $f(x_i)+\hat{g}(x_i)\geq m$ is satisfied by Lemma \ref{hat}. Consider an interval $[x_i,x_{i+1}]$. Since $f$ is
linear and $\hat{g}$ is concave on $[x_i,x_{i+1}]$ (Proposition \ref{concavity}), we have $f(x)+\hat{g}(x)\geq m$ for
each $x\in[x_1,x_{i+1}]$.\end{proof}

\begin{proof}[Proof of Lemma \ref{lemma_implication}] Suppose that $P$ is not a
  vertex of $\Trop(C)$ and $P$ belongs to a horizontal edge of
  $\Trop(C)$. It follows from Remark \ref{rem_contain} that it is
  enough to check the $m$-thickness of
  $\widetilde\A_\mu$ only in the direction $(1,0)$. The latter follows
  from Lemma \ref{lemma_mthick} and Proposition \ref{prop_linear}.
If $P$ is a vertex of $\Trop(C)$, then, again, Remark
\ref{rem_contain} implies that we need to check the $m$-thickness of $\widetilde\A_\mu$ only in the directions of the edges through
$P$. For each edge through $P$, we use Propositions~\ref{tropic_action}, \ref{translation1} for making this edge horizontal. Then we repeat the above arguments.
\end{proof}

\subsection{Proof of the Exertion theorem for edges}

The second part of the Exertion Theorem for edges is proved in
Proposition \ref{prop_maximal}.
\begin{lemma} [c.f. Lemma \ref{prop_width}]
\label{boundary}
Refer to Figure~\ref{graph}(A) for the notation. If $H$ is admissible (Def.~\ref{admissible}) and $\mu^{\trop}_P(H)\geq m$, then $x_{n}-x_0\geq m$.
\end{lemma}
\begin{proof} Let us suppose that $x_{n}-x_0< m$. If $m_0,m_n>0$, then $\omega_{(1,0)}(\A)=x_n-x_0<m$, and the curve $H$ is not
admissible. If $m_0=0$ and  $m_n>0$, then $\omega_{(1,0)}(\A_{v_0})<m$
and $\A_{v_0}$ does not have two vertical
sides, which contradicts the fact that $\A_{v_0}$ is $m$-thick (Proposition~\ref{two_vertical}). If both $m_0=m_n=0$, then we apply the
above argument for $\A_{\max (v_0,v_n)}$.\end{proof}

\begin{prop}
\label{prop_implication}
If a point $P$ is of multiplicity at least $m$ in the intermediate sense,
then $P$ is of multiplicity at least $m$ in the intrinsic sense (Def.~\ref{def_intrinsic}).
\end{prop}

\begin{proof} Indeed, let us take a generalized tropical line $L$. We will verify
  Def.~\ref{def_intrinsic}. If $P$ is the vertex of $L$ or $TC(P)$
  does not contain the vertex of $L$, then the fact that $\widetilde\A_{\mu'}$
  is $m$-thick (Remark~\ref{rem_contain}) implies that $L\cdot_PH\geq m$. If the vertex $V$ of $L$
  belongs to a long edge through $P$, then we use the
  notation in Section~\ref{sec_notation}. We may assume that $L$ has a horizontal edge passing through $P$. Let $V$ belongs to $E_k$. Draw the horizontal section
  through $L(d(E_k))$. A direct calculation and Lemma \ref{hat} show that
  $m$-thickness of $\widetilde\A_{v_k}$ implies that $L\cdot_P H\geq m$. 
\end{proof}

It follows from Lemma \ref{boundary} that

\begin{prop} There are points $b,c\in [x_0,x_n]$
such that $c-b=m$ and one of the following statements hold
\begin{itemize}
\item  $g(b)=g(c)$,
\item $g(b)\leq g(c), c=x_n$,
\item $g(b)\geq g(c), b=x_0$.
\end{itemize}
\end{prop}

The points $b,c$ are chosen in such a way that $\hat{g}_{[b,c]}(x)=(\hat{g}_{[x_0,x_n]})|_{[b,c]}(x)$
for $x\in [b,c]$. By $h|_{[b,c]}$  we mean the restriction of $h$ to $[b,c]$. The definition of $f(x)$ is given before Lemma \ref{lemma_mthick}.
\begin{proof}[Proof of Theorem \ref{exertion_edges}] We complete the proof, applying Lemma \ref{function} on the
interval $[b,c]$ of length $m$:

$\area(\Inf(P))\geq \int_{x_0}^{x_n}f(x)dx \geq \int_b^c f(x)dx \geq \int_b^c(m-\hat{g}(x))dx\geq m(c-b)-\frac{(c-b)^2}{2}=\frac{m^2}{2}$.
\end{proof}

\begin{prop}
\label{prop_position}
If $E_P((1,0))$ coincides with the interval $[A_1,A_n]$ and $x_{n}-x_0=m$, then only one point $P\in [A_1,A_n]$ can be a point of multiplicity $m$ in the intermediate sense.
\end{prop}
\begin{proof}
Indeed, using the $m$-thickness property of $\A_{\max(v_0,v_n)}$, we conclude that $v_0=v_n$ (cf. Lemma \ref{boundary}). This is equivalent to the fact that $\val(a_{x_0y_0})=\val(a_{x_ny_n})$, where $(x_0,y_0)$ is the leftmost vertex of $d(A_1)$ and $(x_n,y_n)$ is the rightmost vertex of $d(A_n)$; see Figure~\ref{graph}. All this notation (Section~\ref{sec_notation}) was developed for the case $P=(0,0)$.   Then, using Proposition \ref{translation1}, we see that the choice of another point $P'\in[A_1,A_n]$ and a subsequent change of the coordinates in order to make $P'=(0,0)$ will destroy the equality $v_0=v_n$.
\end{proof}

We can prove in Example \ref{secondexample}, that if $P$ is of multiplicity $3$ in the extrinsic sense, then $P$ must divide the edge  in the ratio $1:2$. Also, in the hypothesis of the above proposition, it is possible to determine the position of the singular point via tropical modifications (\cite{guide}). 

\subsection{Proof of the Exertion theorem for vertices}
Now we are in the
hypothesis of the Exertion Theorem for vertices,  i.e.
$\mu^{\trop}_P(H)\geq m$, $P$ is a vertex of $H$, and the Newton polygon $\Delta$ of $H$ has minimal lattice width at least $m$.
For each direction $u\in \dirr$ such
that the face $d(P)$ has at most one side perpendicular to $u$, the width
$\omega_u(d(P))$ is at least $m$.
This follows from Lemma \ref{vertical_sides}, since
  $d(P)$ is $m$-thick. 
  
Suppose that the point $P$ belongs to an edge $E\subset H$ of direction
$u$. If $\omega_u (d(P))<m$, then the face $d(P)$ has two sides of
lattice length at least $\deff_u(d(P))$ (Def.~\ref{def_def}), and these sides
are perpendicular to the vector $u$; see Figure \ref{est4}.

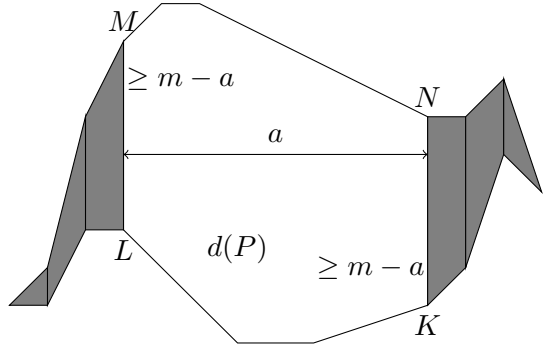
\begin{figure} [htbp]
\begin{center}
\begin{tikzpicture}[scale=0.5]

\draw (0,4)--(0,9)--(1,10)--(2,10)--(8,7)--(8,2)--(5,1)--(3,1)--cycle;
\draw [fill=gray](0,4)--(-1,4)--(-1,7)--(0,9)--cycle;
\draw [fill=gray](-1,4)--(-2,2)--(-2,3)--(-1,7)--cycle;
\draw [fill=gray](-2,2)--(-3,2)--(-2,3)--cycle;
\draw[<->](0,6)--(8,6);
\draw (4,6.5) node{$a$};
\draw (1.5, 8) node{$\geq m-a$};
\draw (6.5,3) node{$\geq m-a$};
\draw [fill=gray](8,7)--(9,7)--(9,3)--(8,2)--cycle;
\draw [fill=gray](9,7)--(10,8)--(10,6)--(9,3)--cycle;
\draw [fill=gray](10,8)--(11,5)--(10,6)--cycle;
\draw (3,3.5) node{$d(P)$};
\draw(0,4) node[below]{$L$};
\draw(0,9) node[above]{$M$};
\draw(8,7) node[above]{$N$};
\draw(8,2) node[below]{$K$};
\end{tikzpicture}
\qquad
\caption {An example of the dual picture to a horizontal long edge
  through $P$, if $P$ is a vertex of
  $H$. We have
  $\omega_{(1,0)}(d(P))=a$ and $\mu_P^{\trop}(H)\geq m$, therefore the
  lengths of $LM$ and $NK$ are at least $m-a$.
The set $\bigcup d(Q)$ for $Q\in \I_{(1,0)}(P), Q\ne P$ is colored.
Lemma~\ref{additional} states that the sum of
the areas of the colored faces is at least $\frac{1}{2}(m-a)^2$.}
\label{est4}
\end{center}
\end{figure}

\begin{lemma}
\label{additional} If $\mu^{\trop}_P(H)\geq m$, $P$ is a vertex of
$H$, and $u\in\dirr$, then
\begin{equation}
\label{eq_defect}
\sum\limits_{V\in
  \I_P(u),V\ne P}\area(d(V))\geq
\frac{1}{2}\deff_u(d(P))^2.
\end{equation}
\end{lemma}

\begin{proof} Applying a change of coordinates (Proposition
  \ref{tropic_action}), we may assume that  $u=(1,0)$. Let $\omega_u(d(P))=a$. The faces of the subdivision contributing
to \eqref{eq_defect} are
colored in Figure \ref{est4}. Now we consider the set $\{(i,j)\in \ZZ^2\}$ where $\val(a_{ij})$
is maximal. It contains the vertices of $d(P)$ and maybe some integer
points inside $d(P)$. As in the proof of the Exertion Theorem for
  edges, we consider the sets $A_\mu$ for different $\mu$, and repeat
all the other steps. In the final step of the proof, instead of the
integral $\int_b^c(m-\hat{g})dx$ we consider the
integral $\int_{b}^{x_i}(m-\hat{g}) + \int_{x_{i+1}}^{c}(m-\hat{g})$ where
$x_i,x_{i+1}$ are the $x$-coordinates of the vertical sides of
$d(P)$. Finally, \begin{align*}\sum\limits_{Q\in \I_u(P),Q\ne
  P}&\area(d(Q))\geq \int_{b}^{x_i}(m-\hat{g})dx +
\int_{x_{i+1}}^{c}(m-\hat{g})dx\\&=m(x_i-b)+m(c-x_{i+1}) -
\int_{b}^{x_i}\hat{g}dx-\int_{x_i}^c\hat{g}dx\\
&=m(m-a)-\left(\frac{1}{2}(c-b)^2-\frac{1}{2}(x_{i+1}-x_i)^2\right) =
\frac{1}{2}(m-a)^2 = \frac{1}{2}\deff_u(d(P))^2,\end{align*} by 
Corollary \ref{cor_integral}.\end{proof}

\begin{proof}[Proof of Theorem \ref{exertion_vertices}] Indeed, it
  follows from Lemma \ref{additional} that $$\area^*(\Inf(P)) = \sum\limits_{\substack {u\in\dirr, \\ V\in
  \I_u(P),V\ne P}}\area(d(V)) + \area(d(P))\geq \area(d(P))+\frac{1}{2}\sum\limits_{u\in\dirr}\deff_u(d(P))^2,$$ 
and the latter expression is at least $\frac{3}{8}m^2$ by Lemma
\ref{area4}.

Similarly, by Lemma \ref{area3} we get
$$\area(\Inf(P)) \geq 2\cdot\area(d(P))+\frac{1}{2}\sum\limits_{u\in\dirr}\deff_u(d(P))^2\geq
\frac{1}{2}m^2.\qedhere$$
\end{proof}

\section{Discussion}
\label{discussion}
{\rightline {``The forceps of our minds are clumsy forceps,} \rightline{and crush
    the truth a little in taking hold of it.''} \rightline{H. G. Wells}}

In this section we show that a point of multiplicity $m$ can
impose fewer than $\frac{m(m+1)}{2}$ linearly independent conditions on the coefficients of the
equation of a curve. Also, we summarize what is known about tropical points of
multiplicity $m$.

\subsection{Examples and the Euler derivative}
\begin{ex}
\label{triangle}
Fix $k\in \mathbb N$. The polygon $T_k$ of the minimal area with $\omega(T_k)=2k$  is the
triangle with vertices $(0,0),(k,2k),(2k,k)$ (see Remark \ref{minimalw}). The triangle $T_k$ comes as the support set of the polynomial
$(1-3xy+xy^2+x^2y)^k=0$ which defines a curve $C$ with $\mu_{(1,1)}(C)=2k$. The area of $T_k$ is $\frac{3}{8}(2k)^2$, which shows
that the estimate in the \hyperref[exertion_vertices]{Exertion Theorem for vertices} is sharp.
\end{ex}

If $char(\KK)=0$, then $\mu_{(1,1)}(C) \geq 2k$ is
equivalent to the set of linear equations 
$\frac{\partial^{q+r}}{\partial^qx\partial^ry} F(x,y)=0, q+r< 2k$ in the coefficients of
the polynomial $F=\sum\limits_{(i,j)\in T_k}a_{ij}x^iy^j$. Note
that among these equations, there are at
least $$\frac{2k(2k+1)-(3k^2+3k+2)}{2}=\frac{k^2-k-2}{2}$$ linearly
dependent ones. Here $\frac{2k(2k+1)}{2}$ is the number of
equations and $\frac{3k^2+3k+2}{2}$ is the number of variables,
i.e., the number of integer points in $T$.  
\begin{ex}
\label{ex_freedom}
To see one more
phenomenon we consider the set
$$\A=\conv((0,0),(1,3),(6,3),(6,4),(3,6),(3,1))=T_3\cup \{(1,3),(3,1),(6,4)\}.$$ 
The only curve $C$ with support in
$\A$ and $\mu_{(1,1)}(C)=6$ is given by the equation
$(1-3xy+xy^2+x^2y)^3=0$. Hence adding three new monomials
$a_{13}xy^3+a_{31}x^3y+a_{64}x^6y^4$ does not add new degrees of
freedom and $a_{13},a_{31},a_{64}$ are always 0.
\end{ex}

We give the following explanation. Consider the constraint on
$a_{ij}$ imposed by the fact that $F_{xx}(1,1)=0$. That is $\sum
i(i-1)a_{ij}=0$. Note that the set of $a_{ij}$ with non-zero
coefficients in this equation is parametrized by $\A\setminus \{(i,j)|i(i-1)=0\}$. So, we say that $i(i-1)$ corresponds to $F_{xx}$.

In a similar way, given $\mu_{(1,1)}=6$, by considering linear combinations of
$F,F_x, F_{xy},\dots, F_{yyyyy}$, we can obtain all the polynomials in $i,j$
of degree at most five.  Next, $(6,4)$ is the only point in $\A$ where
$f(i,j)=(j-3)(i-j)(i-3)(i^2+j^2-ij-3j-3i+6)$ is not zero. The linear equation
corresponding to $f(i,j)$,

\begin{align*}(F_{xxxxy}-2F_{xxxyy}+2F_{xxyyy}&-F_{xyyyy}-3F_{xxxx}+4F_{xxxy}-\\
 &-4F_{xyyy}+3F_{yyyy}-12F_{xx}+12F_{yy}+24F_x-24F_y)|_{(1,1)}=0,
\end{align*} 
 written in terms of $a_{ij}$,  is just $a_{64}=0$. Similar
combinations of derivatives can be found for $a_{13}$ and $a_{31}$.

Let $\mathrm{char}
\ \KK=0$.
In this case, \cite{hyper2} contains the complete description of the matroid $M$ associated with the linear
conditions imposed by the $m$-fold point at $(1,1)$. Namely, all the dependent sets of $M$,
minimal by inclusion, are the sets of the type $\A\setminus\{(i,j)|G(i,j)=0\}$,
where $G\in\KK[i,j]$ is a polynomial of degree at most $m-1$.

Let $\A_G$ be $\A\setminus\{(t,w)|G(t,w)=0\}$. We call the operation $$\partial_G:\sum_{(i,j)\in \A} a_{ij}x^iy^j\to
\sum_{(i,j)\in \A_G} a_{ij}x^iy^j$$ {\it the
Euler derivative with respect to $G$}. Suppose that a tropical curve  $H$ is given by $\Trop(F)$ where $F$ is as in \eqref{eq_curve}.

\begin{prop}[\cite{hyper2}]\label{prop_method}
A point $P\in H$ is a point of multiplicity at least $m$ in the $\KK$-extrinsic
sense (Def.~\ref{def_extrinsic}) if and only if for each polynomial $G\in
\KK[i,j]$ of degree no more than $m-1$, the tropical curve given by
$\Trop(\partial_GF)$ passes through $P$.
\end{prop}

\begin{rem}
\label{euler} If $\mathrm{char}\ \KK=0$, then the above proposition 
implies the $m$-thickness property for
$\A$ if $\mu_{(1,1)}(C)=m$ (cf. Corollary \ref{cor_mthick}
). Indeed, if the set $\A$ is not $m$-thick, then there exists a
collection of $m-1$ lines $l_1,\dots, l_{m-1}$ such that $\A\setminus
\bigcup\{l_i\}=(i',j')\in\ZZ^2$. Let the polynomial $G$ be the product of the equations of the lines $l_i$. Clearly, $\mathrm{deg}(G)=m-1$. Then, $\partial_G
F=a_{i'j'}x^{i'}y^{j'}$, and $\Trop(\partial_G
F)$ is smooth at $P$. This contradicts to
Proposition \ref{prop_method}.  
\end{rem}

One can argue that in Examples \ref{triangle}, \ref{ex_freedom} we have a smaler
degree of freedom because the curves were reducible, so, look at the following example.
\begin{ex}
Consider the curve $C'$ given by the equation
$(x^2y+xy^2-3xy+1)^8+xy^4(x-1)^8=0$. It is irreducible,
$\mu_{(1,1)}(C')=8$ and the number of integer points in the Newton
polygon of $C'$ is 35, which is less than the number of linear
conditions, namely 36.
\end{ex}

\subsection{Tropical points of multiplicity $m$}
\label{tropical_multiplicity}
The aim of the present work was to improve the understanding 
of the combinatorics of tropical singular points. Applications of the Exertion Theorems for Nagata's conjecture can be found in \cite{2013arXiv1310.6684K}.

For a tropical curve $H$, if a point $P$ is of multiplicity at least
$m$ in the $\KK$-extrinsic sense (Def.~\ref{def_extrinsic}),
then $P$ is of multiplicity at least $m$ in the intermediate sense
(Def.~\ref{def_intermediate}); see Lemma \ref{lemma_implication}.

{\bf Question}: is it true that for each $m$-thick (Def.~\ref{mthick}) set
$B\subset \ZZ^2$, there exists a polynomial $G\in\QQ[x,y]$ defining the curve
$C'$ such that $\mu_{(1,1)}(C')\geq m$ and
$\conv(\mathrm{supp}(G))=\conv(B)$? 
As it is shown in Example~\ref{ex_freedom}, the answer is ``no''.

We say that a tropical curve $H$ can be lifted over a field $\KK$ if there exists a curve $C'$ over $\KK$ such that $\Trop(C')=H$.
Let a point $P\in H$ be of multiplicity $m$ in the $\KK$-extrinsic sense for some valuation field $\KK$. Suppose that $H$ can be lifted over another field $\KK'$ of the same characteristic. 

{\bf Question}: is it true that the point $P$ is of multiplicity $m$ in the $\KK'$-extrinsic sense? As far as the author knows, this is an open problem (though not very difficult).  

For a tropical curve $H$, if a point $P\in H$ is of multiplicity at least
$m$ in the intermediate sense,
then $P$ is of multiplicity at least $m$ in the intrinsic sense
(Def.~\ref{def_intrinsic}); see Proposition \ref{prop_implication}.

Note that the method in Proposition \ref{prop_method}, which allows us to verify the definition in the extrinsic sense, requires
information about all the valuations of the coefficients of the equation of the tropical curve $H$. Therefore, we have to know even
those coefficients which can be perturbed without changing $H$. Hence,
given only a tropical curve $H$, the verification of Def.~\ref{def_extrinsic}  is not straightforward.

On the other hand, it is enough to know only the dual subdivision of
the Newton polygon for $H$ in order to verify the definition in the intrinsic sense (Def.~\ref{def_intrinsic}). The
multiplicity in the intrinsic sense of a point $P\in H$ remains the same if we change the
lengths of the edges of $H$. 
Quite the contrary, for Def.~\ref{def_intermediate} of multiplicity in the intermediate sense, the lengths of the edges of $H$ are important because we operate with the extended Newton polyhedron $\widetilde \A$; see also Remark \ref{remark_passing}.

So, if a point $P$ is a
point of multiplicity $m$ in the extrinsic sense, then $P$ satisfies some necessary conditions, for example, estimates in the Exertion Theorems hold and can be easily verified. Nevertheless an ambiguity remains: it is possible that a lot of the points on an edge $E$ are of multiplicity $m$ in the extrinsic sense, but we cannot realize them as tropicalizations of $m$-fold points simultaneously; see examples in \cite{markwig, markwig2}. See also Proposition \ref{prop_position} for the case where we can prove that the position of $P$ is unique.

\bibliography{../../bibliography}
\bibliographystyle{abbrv}

\end{document}